\documentclass[12pt,reqno]{amsart}

\usepackage{graphicx}
\usepackage{multicol}
\usepackage{footmisc}
\usepackage{epsfig}
\usepackage{amsmath, amsfonts, amssymb, mathrsfs}
\usepackage{tikz-cd}

\numberwithin{equation}{section}

\newcommand{\Zshat}{\widehat{\Z}_S}

\newcommand{\E}{{\mathbb E}}

\newcommand{\R}{{\mathbb R}}
\newcommand{\C}{{\mathbb C}}
\newcommand{\N}{{\mathbb N}}

\newcommand{\Z}{{\mathbb Z}}

\newtheorem{theo}{{\sc \bf Theorem}}[section]

\newtheorem{lem}[theo]{{\sc \bf Lemma}}
\newtheorem{prop}[theo]{{\sc \bf Proposition}}

\newenvironment{defin}{\medskip\noindent{\bf Definition:\/} }{\medskip}

\begin{document}

\title{Rapid Decay Subalgebras of C$^*$-Algebras}

\author[Hebert]{Shelley Hebert}
\address{Department of Mathematics,
East Mississippi Community College,
8731 S Frontage Rd. Mayhew, MS 39753, U.S.A.}
\email{shebert@eastms.edu}

\author[Klimek]{Slawomir Klimek}
\address{Department of Mathematical Sciences,
Indiana University Indianapolis,
402 N. Blackford St., Indianapolis, IN 46202, U.S.A.}
\email{klimek@iu.edu}

\author[McBride]{Matt McBride}
\address{Department of Mathematics and Statistics,
Mississippi State University,
175 President's Cir., Mississippi State, MS 39762, U.S.A.}
\email{mmcbride@math.msstate.edu}

\date{\today}

\begin{abstract}
We introduce a general scheme of constructing smooth subalgebras of C$^*$-algebras that are closed under the smooth calculus of self-adjoint elements. We illustrate the scheme with a number of examples.
\end{abstract}

\maketitle

\section{Introduction}

There are many places where dense $*$-subalgebras appear in C$^*$-algebra theory and noncommutative geometry. We mention here a few important ones: unbounded derivations \cite{BEJ}, \cite{Bo}, \cite{KMRSW}, cyclic theory \cite{Connes}, \cite{Ren}, deformation-quatization \cite {Rie} and group C$^*$-algebras with the rapid decay (RD) property \cite{Jo}.

By definition, smooth subalgebras of C$^*$-algebras are dense $*$-subalgebras, complete in their own  Fr\'echet topology, and closed under the holomorphic functional calculus. Smooth subalgebras capture a smooth structure in noncommutative geometry. A stronger condition, closedness under the smooth functional calculus of self-adjoint elements is also often considered as part of the definition.

The stability under the smooth functional calculus of self-adjoint elements is harder to prove than the stability under the holomorphic functional calculus. To facilitate such proofs one usually uses the Fourier transform and reduces the stability under the smooth functional calculus of self-adjoint elements to growth estimates of exponentials. The following concept was discussed in \cite{KM2}: we say that a dense Fr\'echet $*$-subalgebra $\mathcal{A}$ of a C$^*$-algebra $A$ has polynomially bounded exponentials (PBE) if for every self-adjoint $a\in\mathcal{A}$ and each of the norms $\|\cdot\|_{\mathcal{A}}$ defining the Fr\'echet topology of $\mathcal{A}$, $\left\| e^{ita}\right\|_{\mathcal{A}}$ is bounded by a polynomial in $t\in\R$. Obviously, the C$^*$-norm of $A$ has polynomially bounded exponentials.

There are several fairly general approaches to  constructions of such smooth algebras, see for example \cite{BC},  \cite{KS} and \cite{BIO}.  However, in some examples below, the norms do not fit those general approaches and different methods are needed to establish the smooth functional calculus of self-adjoint elements.

We propose in this paper a general scheme that produces smooth subalgebras of C$^*$-algebras that are closed under smooth calculus of self-adjoint elements. Our scheme complements the above mentioned constructions and it seems applicable to a different set of examples. It is based on the concept of rapid decay (RD) subalgebras.

Informally, rapid decay subalgebras of C$^*$-algebras are dense $*$-subalgebras for which every element is given by a series with rapid decay coefficients which are further restricted to lie in specific subsets of the C$^*$-algebra. We discuss a precise definition, some general results, and several examples of RD subalgebras of C$^*$-algebras, including odometers, Super Dihedral group C$^*$-algebras, Bunce-Deddens and UHF C$^*$-algebras. For some of those examples we compare the norms obtained from the general scheme with the norms considered in previous papers.

\section{Smooth Subalgebras}

In this section we review a definition and several general results about smooth subalgebras of C$^*$-algebras. More details can be found in \cite{HKM1}. As indicated in the introduction, smooth subalgebras in noncommutative geometry define smooth structures on noncommutative spaces and thus are a critical part of their geometry.

Let $A$ be a unital C$^*$-algebra with norm $\|\cdot\|$ and let  $\mathcal{A}\subseteq A$
be a dense unital $*$-subalgebra of $A$, sharing the same unit as $A$, which is a Fr\'echet $*$-algebra with respect to a locally convex topology stronger than that of $A$. We will always suppose that we can define the Fr\'echet topology of $\mathcal{A}$ using a countable collection of submultiplicative seminorms $\|\cdot\|_N$, $N=0,1,\ldots$. We call $\mathcal{A}$ a {\it Fr\'echet subalgebra} of $A$. The most common way to construct Fr\'echet subalgebras of C$^*$-algebras is as domains of closed derivations and their powers, and more generally as smooth elements of an action of a Lie group, see \cite{Bo}. Powers of derivations are used to define seminorms and give a Fr\'echet topology on such subalgebras.

There are several natural concepts of stability of a subalgebra. We say that $\mathcal{A}$ is {\it spectrally stable} if for any element $a$ of $\mathcal{A}$ its spectrum in $\mathcal{A}$ is the same as its spectrum in $A$. Equivalently, from the definition of the spectrum, $\mathcal{A}$ is spectrally stable if for any element $a\in \mathcal{A}$ that is invertible in $A$ its inverse $a^{-1}$ is in $\mathcal{A}$.

We say that $\mathcal{A}$ is {\it stable under the holomorphic functional calculus} if for any $a\in \mathcal{A}$ and a function $f$ that is holomorphic on an open domain containing the spectrum of $a$ we have $f(a)$ is in $\mathcal{A}$. By considering the holomorphic (away from zero) function $\zeta\mapsto 1/\zeta$, we see that the stability under the holomorphic functional calculus is a stronger condition than the spectral stability. Notice however that if $\mathcal{A}$ is a Fr\'echet subalgebra of $A$ then the spectral stability implies stability under the holomorphic functional calculus.

We say that $\mathcal{A}$ is {\it stable under the smooth functional calculus} of self-adjoint elements if for any self-adjoint element $a$ of $\mathcal{A}$ and a smooth function $f$ defined on an open neighborhood of the spectrum of $a$ we have $f(a)$ is in $\mathcal{A}$. When  $\mathcal{A}$ a Fr\'echet subalgebra of $A$, we have the following elementary observation:
if $\mathcal{A}$ is stable under the smooth functional calculus of self-adjoint elements then $\mathcal{A}$ is stable under the holomorphic functional calculus.

This motivates the following definition: a  dense $*$-subalgebra $\mathcal{A}\subseteq A$ is called {\it smooth} if it is a Fr\'echet subalgebra of $A$ and which is stable under the smooth functional calculus of self-adjoint elements.

In general it is not easy to establish stability under the smooth functional calculus of self-adjoint elements. More information about the Fr\'echet seminorms is required; submultiplicativity is not enough. In a typical approach one can build up any smooth function from exponentials.
This inspires the following concept.
 
We say that $\mathcal{A}$ has {\it polynomially bounded exponentials (PBE)}  if for every self-adjoint $a\in\mathcal{A}$ and every $N=0,1,\ldots$ there are  positive constants $k$ and  $C$ (both depending on $a$ and $N$) such that for every $t\in\R$ we have:
\begin{equation*}
\left\| e^{ita}\right\|_N\leq C(1+|t|)^k.
\end{equation*}

Suppose that a Fr\'echet subalgebra $\mathcal{A}\subseteq A$ has polynomially bounded exponentials. Then $\mathcal{A}$ is closed under the smooth functional calculus of self-adjoint elements; in other words, it is a smooth subalgebra of $A$. 

Finally, we mention here a scenario for establishing the PBE property for a norm that is related in a specific way to another norm that has the PBE property. The following useful general result is inspired by the techniques in \cite{BC}.
\begin{prop}\label{blackcunt} If a norm $\|\cdot\|$ has polynomially bounded exponentials and a norm $\|\cdot\|_1$ satisfies
\begin{equation*}
\|ab\|_1\leq\|a\|\|b\|_1+\|a\|_1\|b\|
\end{equation*}
then then the norm $\|\cdot\|_1$ has polynomially bounded exponentials.
\end{prop}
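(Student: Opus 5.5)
Here the element $a$ is self-adjoint and lies in the common domain of both norms, so $e^{ita}$ is defined there. The hypothesis on $\|\cdot\|$ gives $\|e^{ita}\|\le C(1+|t|)^k$. The approach is to control $\|e^{ita}\|_1$ through a Leibniz-type splitting of the exponential into $n$ equal factors, with $n$ chosen in terms of $t$.

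First I derive the $n$-fold version of the inequality. By induction on $n$,
\begin{equation*}
\|a_1a_2\cdots a_n\|_1\le \sum_{j=1}^n \|a_1\|\cdots\|a_{j-1}\|\,\|a_j\|_1\,\|a_{j+1}\|\cdots\|a_n\|.
\end{equation*}
The inductive step applies the hypothesis to $a_1\cdot(a_2\cdots a_n)$ and uses submultiplicativity of $\|\cdot\|$ on the factor $\|a_2\cdots a_n\|$. Taking all factors equal to $e^{ita/n}$ gives
\begin{equation*}
\|e^{ita}\|_1\le n\,\|e^{ita/n}\|_1\,\sup_{|s|\le|t|}\|e^{isa}\|^{\,n-1}.
\end{equation*}
Raising the sup to the power $n-1$ is too crude if $n$ grows, so I refine this. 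In the $j$-th term the factors to the left combine into $e^{i(j-1)ta/n}$, and those to the right into $e^{i(n-j)ta/n}$. Submultiplicativity is not even needed: applying the two-term inequality twice to $e^{i(j-1)ta/n}\cdot e^{ita/n}\cdot e^{i(n-j)ta/n}$ bounds that term by $C^2(1+|t|)^{2k}\|e^{ita/n}\|_1$, plus terms again of the same shape. It is cleaner to organise this as a recursion directly.

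Set $\phi(t)=\|e^{ita}\|_1$. The hypothesis applied to $e^{ita}=e^{ita/2}e^{ita/2}$ gives
\begin{equation*}
\phi(t)\le 2C(1+|t|)^k\phi(t/2).
\end{equation*}
Iterating $m$ times with $2^m\approx|t|$ gives
\begin{equation*}
\phi(t)\le 2^m C^m\prod_{l}(1+|t|/2^l)^k\,\sup_{|s|\le1}\phi(s).
\end{equation*}
This is not polynomial, because $C^m$ with $m\sim\log_2|t|$ gives a power $|t|^{\log_2 C}$, which is fine. The real problem is the product $\prod_l (1+|t|/2^l)^k$, which is of order $|t|^{k\log|t|}$, superpolynomial. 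So halving is the obstacle, and the fix is to peel off one small piece at a time instead.

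Write $e^{i(t+s)a}=e^{ita}e^{isa}$ with $|s|\le1$. The hypothesis gives
\begin{equation*}
\phi(t+s)\le C(1+|t|)^k\phi(s)+\phi(t)\,C(1+|s|)^k,
\end{equation*}
which still carries the multiplicative factor $C2^k$ on $\phi(t)$ and would produce exponential growth. To avoid any multiplicative growth, I instead use the first-order identity: $\frac{d}{dt}e^{ita}=ia\,e^{ita}$. Applying the hypothesis to $a\cdot e^{ita}$ gives
\begin{equation*}
\|a e^{ita}\|_1\le \|a\|\phi(t)+\|a\|_1\,C(1+|t|)^k,
\end{equation*}
but this again yields only Grönwall-type exponential bounds.

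The better route is Duhamel, which is the technique of \cite{BC}. Choose $n=\lceil |t|\rceil+1$ so that $|t/n|\le1$. In the $n$-fold expansion, each term is exactly
\begin{equation*}
\|e^{i(j-1)ta/n}\|\,\|e^{ita/n}\|_1\,\|e^{i(n-j)ta/n}\|,
\end{equation*}
provided I expand without using submultiplicativity. That is, I prove the inductive bound in the form
\begin{equation*}
\|b\,c\|_1\le\|b\|\|c\|_1+\|b\|_1\|c\|
\end{equation*}
with $b=e^{i(j-1)ta/n}$ and $c=e^{ita/n}$ kept whole: induct on $j$ with $\psi_j=\|e^{ijta/n}\|_1$, getting
\begin{equation*}
\psi_j\le \|e^{i(j-1)ta/n}\|\,\psi_1+\psi_{j-1}\|e^{ita/n}\|.
\end{equation*}
The last factor is bounded by $C2^k$, which again compounds. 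Hence the key step, and the main obstacle, is keeping the coefficient of $\psi_{j-1}$ equal to $1$. I would do this by putting the polynomial growth on the other factor: write $e^{ijta/n}=e^{ita/n}\cdot e^{i(j-1)ta/n}$ in the order that leaves $\psi_{j-1}$ multiplied by $\|e^{ita/n}\|$. If the C$^*$-norm (a unitary, of norm $1$) is the reference norm, this factor equals $1$. In general I would first replace $\|\cdot\|$ by the C$^*$-norm when applicable, or else assume $\sup_{|s|\le1}\|e^{isa}\|$ can be taken equal to $1$.

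Granting that, summing the recursion gives
\begin{equation*}
\phi(t)\le n\,C(1+|t|)^k\sup_{|s|\le1}\phi(s)\le C'(1+|t|)^{k+1},
\end{equation*}
where $\sup_{|s|\le 1}\phi(s)$ is finite by continuity of $s\mapsto e^{isa}$ in the Fréchet topology.
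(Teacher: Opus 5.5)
\textbf{There is a genuine gap, and it is exactly the step you grant at the end.} Whichever order you split $e^{ijta/n}$, the hypothesis only gives $\psi_j\le\|e^{i(j-1)ta/n}\|\,\psi_1+\|e^{ita/n}\|\,\psi_{j-1}$. So the coefficient of $\psi_{j-1}$ is always $\|e^{ita/n}\|$, and it is at most $1$ only when the reference norm is contractive on the unitaries $e^{isa}$.

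Neither of your fixes is legitimate. The Leibniz inequality is assumed for $\|\cdot\|$, and putting the (typically smaller) C$^*$-norm on the right-hand side is a stronger assumption, not a consequence. The condition $\sup_{|s|\le1}\|e^{isa}\|=1$ is simply an extra hypothesis; by submultiplicativity it forces $\|e^{isa}\|\le1$ for all $s$.

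What you have actually proved is the case where $\|\cdot\|$ is the C$^*$-norm. There, even more simply, $\phi(t+s)\le\phi(t)+\phi(s)$ gives linear growth. But in that case the hypothesis ``$\|\cdot\|$ has PBE'' is vacuous. It is also not the case the paper needs: for $\|\cdot\|_{M,N}$ the natural reference norm is $\|\cdot\|_{M-1,N}$, since $\|a\|_{M,N}=\|a\|_{M-1,N}+\|\delta_{\mathbb L}(a)\|_{M-1,N}$. The paper itself states this proposition without proof.

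\textbf{The gap cannot be closed.} In the stated generality the proposition is false, and your halving bound $\exp(C\log^2|t|)$ is the true order of growth. Here is a counterexample.
\begin{itemize}
\item Take the $*$-algebra of functions on the circle whose Fourier coefficients are $O(e^{-R|n|})$ for every $R$.
\item Set $\|f\|=\sum_n|\hat f(n)|(1+|n|)^k$ and $\|f\|_1=\sum_n|\hat f(n)|\,w(n)$, where $w(n)=\exp(c\log^2(1+|n|))$ and $c=k/4$.
\item An elementary concavity estimate for $\log(1+x)$ gives $w(m+n)\le(1+|m|)^k w(n)$ whenever $|m|\le|n|$. This yields the Leibniz inequality.
\item $\|\cdot\|$ is submultiplicative and has PBE. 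Real $f$ in this algebra are smooth, so $\|(e^{itf})^{(j)}\|_\infty=O(|t|^j)$ and hence $\|e^{itf}\|=O(|t|^{k+2})$.
\item Now take $f=2\cos\theta$ and $g=e^{itf}$. For $|t|\ge1$, compare $\sum_n n^2|\hat g(n)|^2=2t^2$ with $\sum_n n^4|\hat g(n)|^2\le36t^4$. By Cauchy--Schwarz this gives $\sum_{|n|\ge|t|}|\hat g(n)|\ge\tfrac16$, so $\|e^{itf}\|_1\ge\tfrac16\exp(c\log^2(1+|t|))$, which is not polynomially bounded.
\end{itemize}

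\textbf{What is true.} The derivation case $\|a\|_1=\|a\|+\|\delta(a)\|$ does hold, and it is what the paper's application actually uses. There the genuine Duhamel formula $\delta(e^{ita})=it\int_0^1 e^{ista}\,\delta(a)\,e^{i(1-s)ta}\,ds$ gives $\|\delta(e^{ita})\|\le|t|\,\|\delta(a)\|\sup_{0\le s\le1}\|e^{ista}\|\,\|e^{i(1-s)ta}\|$, which is polynomial. Your recursion is not Duhamel in this sense: it never uses a derivation, and without one the factor $\|e^{ita/n}\|$ cannot be removed.
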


\section{RD Subalgebras}

In this section, we introduce and discuss a definition of a rapid decay subalgebra $A_\textrm{RD}$ in a C$^*$-algebra $A$, the main object studied in this paper.  Essentially, $A_\textrm{RD}$ is a dense $*$-subalgebra of $A$ where each element is given by an infinite series of rapid decay terms such that each them belongs to a fixed subspace of $A$ with some extra conditions.

We will need a concept of a bimodular projection. Suppose $B$ is a C$^*$-subalgebra of a C$^*$-algebra $A$. A linear continuous map $E:A\to B$ is called a bimodular projection from $A$ to $B$ if $E$ is a projection onto $B$ and has the bimodule property:
\begin{equation*}
E(ab)=E(a)b\ \text{ and }\ E(ba)=bE(a)
\end{equation*}
for all $a\in A$ and $b\in B$. In particular, conditional expectations are examples of bimodular projections.

Let $A$ be a C$^*$-algebra and let $\{A_n\}_{n\in\Z_{\ge0}}$ be a collection of C$^*$-subalgebras of $A$ such that $A_n\subsetneqq A_{n+1}$ and such that $\bigcup_{n\in\Z_{\ge0}}A_n$ is dense in $A$.  For each $n$, suppose that there exist a bimodular projection:
\begin{equation*}
E_n:A_{n+1}\to A_n\,.
\end{equation*}
Also, define maps $E_{m,n}:A_m\to A_n$ for $m>n$ by
\begin{equation*}
E_{m,n} = E_n\circ\cdots\circ E_{m-1}\,.
\end{equation*}
Notice that, by definition, the maps $E_{m,n}$ are bimodular projections.
Additionally, we assume that the maps $E_{m,n}$ are uniformly bounded:
\begin{equation}\label{Emnunifbnd}
\|E_{m,n}\|\leq \Omega
\end{equation}
for some constant $\Omega$. Notice that, since $E_{m,n}$ are projections, we must have $\Omega\ge1$.  Moreover, if $\Omega=1$, by Tomiyama's Theorem (Theorem II.6.10.2 in \cite{Black}), $E_{m,n}$ are conditional expectations.

We can define natural bimodular projections :
\begin{equation*}
\mathcal{E}_n:\bigcup_{m\in\Z_{\ge0}}A_m\to A_n\,,
\end{equation*}
by 
\begin{equation*}
\mathcal{E}_n(a) = E_{m,n}(a)
\end{equation*}
if $a\in A_m$. This is well-defined as $E_{m,n}(a)$ is independent of $m$ because $E_j$'s are projections onto $A_j$'s. Now, since $\bigcup_{m\in\Z_{\ge0}}A_m$ is dense in $A$, then $\mathcal{E}_n$ can be naturally extended to $A$ by the uniform continuity of $E_{m,n}$. 

Let $B_0=A_0$ and for $n\ge1$ set $B_n=\textrm{ker }E_{n-1}$. Then, for each $n$, $B_n$ is a subspace of $A_n$.  Notice that for $a\in A_n$ we can write
\begin{equation*}
a=E_{n-1}(a) + (a-E_{n-1}(a))\,.
\end{equation*}
By definition, $E_{n-1}(a)\in A_{n-1}$, additionally we have that
\begin{equation*}
E_{n-1}(a-E_{n-1}(a)) = E_{n-1}(a)-E_{n-1}^2(a) = E_{n-1}(a)-E_{n-1}(a)=0\,,
\end{equation*}
and so, $a-E_{n-1}(a)\in B_n$.  Thus for $n\ge1$, we have that $A_n\cong A_{n-1}\oplus B_n$.  Proceeding in a recursive fashion we have that
\begin{equation*}
A_n \cong B_0\oplus B_1\oplus \cdots\oplus B_n\,.
\end{equation*}
Thus, if $a\in A_n$, we have that
\begin{equation*}
a=\sum_{j=0}^na_j\,,\quad\textrm{where }a_j\in B_j\,,\textrm{ and }a_0=\mathcal{E}_0(a)\,,a_j=\mathcal{E}_j(a)-\mathcal{E}_{j-1}(a)
\end{equation*}
More generally, we can associate to any $a\in A$ a unique formal series
\begin{equation*}
a=\sum_{j=0}^\infty a_j\,,\quad\textrm{where }a_j\in B_j\,,\textrm{ and }a_0=\mathcal{E}_0(a)\,,a_j=\mathcal{E}_j(a)-\mathcal{E}_{j-1}(a)
\end{equation*}
Since $\mathcal{E}_j$ are bimodular projections, it follows that
\begin{equation*}
\|a_0\|\le \Omega\|a\| \textrm{ and that }\|a_j\|\le 2\Omega\|a\|\textrm{ for }j\ge1\,.
\end{equation*}

We have the following lemma.
\begin{lem}\label{inBnm}
If $a\in B_n$ and $b\in B_m$, then $ab\in B_n$ if $n>m$, $ab\in B_m$ if $m>n$, and $ab\in A_n$ if $n=m$.
\end{lem}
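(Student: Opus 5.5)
We handle the case $n>m$ by a direct computation with the bimodule property of $E_{n-1}$. The case $m>n$ is symmetric, and the case $n=m$ is trivial.

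\emph{Case $n>m$.} Take $a\in B_n$ and $b\in B_m$. Since $B_m\subseteq A_m\subseteq A_{n-1}\subseteq A_n$, the product $ab$ lies in $A_n$, so $E_{n-1}(ab)$ is defined. We need $E_{n-1}(ab)=0$. The map $E_{n-1}:A_n\to A_{n-1}$ is a bimodular projection and $b\in A_{n-1}$, so
\begin{equation*}
E_{n-1}(ab)=E_{n-1}(a)\,b=0\cdot b=0,
\end{equation*}
because $a\in B_n=\ker E_{n-1}$. Hence $ab\in B_n$.

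If $m=0$, then $B_0=A_0\subseteq A_{n-1}$ (here $n\ge1$), and the same computation applies. There is no issue when $n=1$ either, since $m<n$ forces $m=0$.

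\emph{Case $m>n$.} Interchange the roles: now $a\in A_{m-1}$, and the other half of the bimodule property gives
\begin{equation*}
E_{m-1}(ab)=a\,E_{m-1}(b)=0.
\end{equation*}
Hence $ab\in B_m$.

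\emph{Case $n=m$.} Both $a$ and $b$ lie in $A_n$, and $A_n$ is a subalgebra, so $ab\in A_n$.

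There is no real obstacle in this argument. The only point to check is that the factor with the lower index genuinely belongs to the domain algebra $A_{k-1}$ of the relevant projection $E_{k-1}$, where $k$ is the larger index. This follows from the nesting $A_j\subsetneqq A_{j+1}$, so that $B_m\subseteq A_m\subseteq A_{k-1}$ whenever $m<k$.
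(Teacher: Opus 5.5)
Your proof is correct and follows the paper's argument exactly: with $k$ the larger of the two indices, the bimodule property of $E_{k-1}$ kills the product because the higher-index factor lies in $\ker E_{k-1}$, and the case $n=m$ is just closure of $A_n$ under multiplication. You are slightly more careful than the paper in explicitly checking that the lower-index factor lies in $A_{k-1}$, which the bimodule identity requires.
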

\begin{proof}
If $n>m$, then $A_m\subseteq A_n$, however, $A_n$ is an algebra, thus $ab\in A_n$.  Since $E_{n-1}$ is a bimodular projection and $a\in B_n$, we have that
\begin{equation*}
E_{n-1}(ab) = E_{n-1}(a)b = 0\,.
\end{equation*}
Thus, $ab\in B_n$.  The proof for $n<m$ is the same and if $n=m$, we can only guarantee $ab\in A_n$.
\end{proof}

We are now ready to state a suitable definition of a rapid decay subalgebra.

\begin{defin}
Let $A$ be a C$^*$-algebra and $B_n$'s defined as above, let $\{\lambda_n\}$ be an increasing sequence of numbers such that
\begin{equation}\label{lnassumption}
\lambda_n\geq2\Omega(n+1).
\end{equation}
We define $A_\textrm{RD}$ to be a {\it rapid decay (RD)} subalgebra in $A$ by
\begin{equation*}
A_\textrm{RD} = \left\{a=\sum_{n=0}^\infty a_n: a_n\in B_n\,,\,\|a\|_N := \sum_{n=0}^\infty\|a_n\|\lambda_n^N<\infty\,,N=0,1,\ldots\right\}.
\end{equation*}
\end{defin}

First notice that if $a\in A_\textrm{RD}$, then its series representation is norm convergent in $A$ using the Weierstrass $M$-test and the fact that $\|a\|\leq\|a\|_0=\sum_{n\ge0}\|a_n\|<\infty$.  
The $a_n$'s in this expansion are unique since they are given by the bimodular projections $\mathcal{E}_n$ applied to $a$. 

It is clear that $\|\cdot\|_N$ defines a norm for each $N$, so it follows that $A_\textrm{RD}$ must be a subspace of $A$.  Moreover, since the partial sums of $a$ converge in $\|\cdot\|$, it follows that $A_\textrm{RD}$ is dense in $A$.  
What is not immediately clear is that $\|\cdot\|_N$ is submultiplicative which we verify using the following lemma.

\begin{lem}\label{Nnorm_est_in_A_n}
If $a\in A_n$, then $\|a\|_N\le  \|a\|\lambda_n^{N+1}$
\end{lem}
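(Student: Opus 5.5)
If $a\in A_n$, then $\mathcal{E}_j(a)=a$ for every $j\ge n$. So in the expansion $a=\sum_j a_j$ only the terms $a_0,\dots,a_n$ can be nonzero: for $j>n$ we have $a_j=\mathcal{E}_j(a)-\mathcal{E}_{j-1}(a)=a-a=0$. Consequently
\begin{equation*}
\|a\|_N=\sum_{j=0}^n \|a_j\|\lambda_j^N .
\end{equation*}
The plan is to bound each term using the estimates $\|a_0\|\le\Omega\|a\|$ and $\|a_j\|\le 2\Omega\|a\|$ for $j\ge1$, both recorded before Lemma \ref{inBnm}. Together with the monotonicity of $\{\lambda_j\}$, this gives $\lambda_j^N\le\lambda_n^N$ for $j\le n$.

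Next we sum over $j$. Using $\Omega\le 2\Omega$ for the $j=0$ term, we get
\begin{equation*}
\|a\|_N\le \lambda_n^N\Big(\Omega+\sum_{j=1}^n 2\Omega\Big)\|a\|\le 2\Omega(n+1)\,\lambda_n^N\|a\|.
\end{equation*}
Finally, assumption (\ref{lnassumption}) says $2\Omega(n+1)\le\lambda_n$, which turns the right-hand side into $\lambda_n^{N+1}\|a\|$, as claimed.

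The only step needing any care is the vanishing of $a_j$ for $j>n$. It rests on $\mathcal{E}_j$ restricting to the identity on $A_n\subseteq A_j$, which holds because each $E_{m,j}$ is a projection onto $A_j$ and $\mathcal{E}_j$ is well defined on the union. For $n=0$ the argument degenerates correctly: $\|a\|_N=\|\mathcal{E}_0(a)\|\lambda_0^N=\|a\|\lambda_0^N\le\|a\|\lambda_0^{N+1}$, since $\lambda_0\ge 2\Omega\ge1$. The rest is a routine summation.
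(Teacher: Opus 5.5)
Your proof is correct and follows essentially the same route as the paper's: restrict to the finite expansion $a=a_0+\cdots+a_n$, bound each $\|a_j\|$ by $2\Omega\|a\|$, use monotonicity of $\lambda_j$, and absorb $2\Omega(n+1)$ via assumption \eqref{lnassumption}. Your explicit justification that $a_j=0$ for $j>n$ and your separate handling of the $j=0$ term are details the paper leaves implicit.
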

\begin{proof}
Since $a\in A_n$, we use its finite expansion to get
\begin{equation*}
\|a\|_N=\|a_0+\cdots+a_n\|_N = \sum_{j=0}^n\|a_j\|\lambda_j^N\le 2\Omega \lambda_n^N\sum_{j=0}^n\|a\| = 2\Omega (n+1)\lambda_n^N\|a\|\le  \|a\|\lambda_n^{N+1},
\end{equation*}
where we used that fact that the $\lambda_n$'s form an increasing sequence, the fact that $\|a_j\|\le 2\Omega\|a\|$ and assumption \eqref{lnassumption}.
\end{proof}

We refer to norms $\|\cdot\|_N$ as RD norms.

\begin{prop}
The RD norms $\|\cdot\|_N$ are submultiplicative norms for $N\ge1$.  Moreover we have that
\begin{equation*}
\|ab\|_0\le \|a\|_1\|b\|_0\textrm{ and }\|ab\|_0\le\|a\|_0\|b\|_1\,.
\end{equation*}
\end{prop}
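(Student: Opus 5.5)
The plan is to expand both factors in their RD series, $a=\sum_n a_n$ and $b=\sum_m b_m$ with $a_n\in B_n$, $b_m\in B_m$, and to estimate $ab=\sum_{n,m}a_nb_m$ term by term. Lemma \ref{inBnm} tells us which subspace each term lies in, and Lemma \ref{Nnorm_est_in_A_n} controls the diagonal terms.

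First I would justify the inequality
\begin{equation*}
\|ab\|_N\le\sum_{n,m}\|a_nb_m\|_N .
\end{equation*}
Since $\|a\|\le\|a\|_0$ and likewise for $b$, the double series $\sum_{n,m}a_nb_m$ converges absolutely in the C$^*$-norm to $ab$. The components of $ab$ are $(ab)_k=\mathcal{E}_k(ab)-\mathcal{E}_{k-1}(ab)$, and these maps are norm-continuous, so $(ab)_k=\sum_{n,m}(a_nb_m)_k$. Therefore
\begin{equation*}
\|ab\|_N=\sum_k\|(ab)_k\|\lambda_k^N\le\sum_k\sum_{n,m}\|(a_nb_m)_k\|\lambda_k^N=\sum_{n,m}\|a_nb_m\|_N,
\end{equation*}
where interchanging the sums is allowed because all terms are nonnegative. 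This convergence bookkeeping is the only slightly delicate point. It also shows, once the bounds below are in place, that $ab\in A_{\textrm{RD}}$.

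Next I would bound the individual terms, splitting into the cases $n\ne m$ and $n=m$.
\begin{itemize}
\item If $n>m$, Lemma \ref{inBnm} gives $a_nb_m\in B_n$. Its expansion then has a single component, so $\|a_nb_m\|_N=\|a_nb_m\|\lambda_n^N\le \|a_n\|\lambda_n^N\|b_m\|$.
\item The case $m>n$ is symmetric and gives $\|a_nb_m\|_N\le\|a_n\|\,\|b_m\|\lambda_m^N$.
\item If $n=m$, we only know $a_nb_n\in A_n$. Lemma \ref{Nnorm_est_in_A_n} then gives $\|a_nb_n\|_N\le\|a_n\|\|b_n\|\lambda_n^{N+1}$.
\end{itemize}

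For $N\ge1$ I would use $\lambda_k\ge 2\Omega\ge 2>1$, which follows from \eqref{lnassumption}. In the off-diagonal case this lets us insert the missing factor $\lambda_m^N$ (respectively $\lambda_n^N$). In the diagonal case it gives $\lambda_n^{N+1}\le\lambda_n^{2N}$. Hence every term satisfies $\|a_nb_m\|_N\le\|a_n\|\lambda_n^N\,\|b_m\|\lambda_m^N$, and summing over $n,m$ gives $\|ab\|_N\le\|a\|_N\|b\|_N$. The diagonal case is exactly why $N=0$ must be excluded: there one needs an extra factor $\lambda_n$, which is absorbed only when $N\ge1$.

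For $N=0$ I would instead place one factor of $\lambda$ on the index of $a$. Each term is then bounded by $\|a_n\|\lambda_n\|b_m\|$:
\begin{itemize}
\item for $n>m$, the bound is $\|a_n\|\|b_m\|\le\|a_n\|\lambda_n\|b_m\|$;
\item for $n=m$, Lemma \ref{Nnorm_est_in_A_n} gives exactly $\|a_n\|\|b_n\|\lambda_n$;
\item for $m>n$, the bound is $\|a_n\|\|b_m\|\le\|a_n\|\lambda_n\|b_m\|$.
\end{itemize}
Summing gives $\|ab\|_0\le\|a\|_1\|b\|_0$. Placing the factor of $\lambda$ on $b$ instead yields $\|ab\|_0\le\|a\|_0\|b\|_1$ by the same argument.
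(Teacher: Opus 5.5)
Your proposal is correct and follows essentially the same route as the paper: you expand both factors, split the double sum into the cases $n>m$, $n<m$, $n=m$, bound the off-diagonal terms using Lemma \ref{inBnm} and the diagonal terms using Lemma \ref{Nnorm_est_in_A_n}, and absorb the extra factor $\lambda_n$ when $N\ge1$. You also fill in details the paper leaves implicit: the termwise triangle inequality for the infinite sum (via continuity of the $\mathcal{E}_k$ and Tonelli), the fact that an element of $B_n$ has a single component, and the explicit $N=0$ inequalities, obtained by putting the extra $\lambda$ on one factor.
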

\begin{proof}
For $a,b\in A_\textrm{RD}$ we expand $a$ and $b$ in their respective series representations, and thus by the triangle inequality we have that
\begin{equation*}
\begin{aligned}
\|ab\|_N &= \left\|\sum_{n=0}^\infty a_n\sum_{m=0}^\infty b_m\right\| = \left\|\sum_{n>m}a_nb_m + \sum_{n<m}a_nb_m + \sum_{n=m}a_nb_m\right\|\\
&\le \sum_{n>m}\|a_nb_m\|_N + \sum_{n<m}\|a_nb_m\|_N + \sum_{n=m}\|a_nb_m\|_N
\end{aligned}
\end{equation*}
Using Lemma \ref{inBnm}, notice that for $n>m$, $a_nb_m\in B_n$, for $n<m$, $a_nb_m\in B_m$ and for $n=m$, $a_nb_m\in A_n$.  Thus by Lemma \ref{Nnorm_est_in_A_n} we have that
\begin{equation*}
\begin{aligned}
\|ab\|_N &\le \sum_{n>m}\|a_n\|\|b_m\|\lambda_n^N + \sum_{n<m}\|a_n\|\|b_m\|\lambda_m^N + \sum_n\|a_n\|b_n\|\lambda_n^{N+1} \\
&\le \sum_{n\neq m}\|a_n\|\|b_m\|\lambda_n^N\lambda_m^N + \sum_n\|a_n\|b_n\|\lambda_n^{N+1}\\
&\le \sum_{m,n}\|a_n\|\|b_m\|\lambda_n^N\lambda_m^N = \|a\|_N\|b\|_N\,.
\end{aligned}
\end{equation*}
where the last inequality follows if $N\ge1$.
\end{proof}

Our next goal is to establish the PBE property of the $N$-norms $\|\cdot\|_N$. This is done using a perturbative argument and it is broken into several steps described in the following lemmas.

For any natural $n$ and $a\in A_\textrm{RD}$, write $a=a_{\leq n} + a_{>n}$ where 
\begin{equation*}
a_{\leq n} = \sum_{k\le n}a_k \quad \text{and}\quad a_{> n} = \sum_{k>n}a_k\,,
\end{equation*}
with $a_k\in B_k$. Note that $a_{\leq n}$ is always a finite sum and is an element of $A_n$; we will refer to this as the \textit{head} of $a$ while $a_{> n}$ is the \textit{tail} of $a$.

To control the size of the tail of $a$ in $A_\textrm{RD}$ we need the following estimate.
\begin{lem}\label{lemma1}
For any $\alpha > 0$ we have
$$\|a_{> n}\|_{N} \le \dfrac{1}{\lambda_{n+1}^\alpha} \|a_{> n}\|_{N+\alpha}.$$
\end{lem}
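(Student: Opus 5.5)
The plan is to expand both sides directly from the definition of the RD norms and compare them term by term. By construction the tail $a_{>n}=\sum_{k>n}a_k$ has as its $B_k$-components exactly $a_k$ for $k>n$ and zero for $k\le n$. This is because the decomposition into the $B_k$ is unique: the components are recovered by applying the bimodular projections $\mathcal{E}_k$. Hence
\begin{equation*}
\|a_{>n}\|_N=\sum_{k>n}\|a_k\|\lambda_k^N ,\qquad \|a_{>n}\|_{N+\alpha}=\sum_{k>n}\|a_k\|\lambda_k^{N+\alpha}.
\end{equation*}
The second quantity may be infinite if $\alpha$ is not an integer and we only know finiteness for integer indices. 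In that case the inequality holds trivially, so we may assume it is finite. For $a\in A_{\mathrm{RD}}$ and integer $\alpha$ it is automatically finite.

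The key step is monotonicity of the $\lambda_k$. Since $\{\lambda_k\}$ is increasing, for every $k>n$ we have $\lambda_k\ge\lambda_{n+1}$. Moreover $\lambda_{n+1}\ge 2\Omega(n+2)>0$ by \eqref{lnassumption}, so $\lambda_k^\alpha\ge\lambda_{n+1}^\alpha>0$ for $\alpha>0$. This gives $\lambda_k^N\le \lambda_k^{N+\alpha}/\lambda_{n+1}^\alpha$ for each $k>n$.

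Multiplying by $\|a_k\|$ and summing over $k>n$ then yields the claimed bound
\begin{equation*}
\|a_{>n}\|_N\le \frac{1}{\lambda_{n+1}^\alpha}\|a_{>n}\|_{N+\alpha}.
\end{equation*}
The argument has no real obstacle. The only point needing care is the identification of the components of $a_{>n}$, which rests on the uniqueness of the series expansion noted after the Definition.
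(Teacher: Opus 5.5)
Your proposal is correct and follows the paper's proof: both expand the tail's RD norm as $\sum_{k>n}\|a_k\|\lambda_k^N$ and use $\lambda_k\ge\lambda_{n+1}>0$ for $k>n$ to bound each term by $\lambda_k^{N+\alpha}/\lambda_{n+1}^{\alpha}$. Your extra remarks on uniqueness of the components and on finiteness for non-integer $\alpha$ are harmless details that the paper leaves implicit.
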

\begin{proof}
\begin{equation*}
\begin{aligned}
\|a_{> n}\|_N &= \sum_{k>n}\|a_k\|\lambda_k^N 
\le \frac{1}{\lambda_{n+1}^\alpha}\sum_{k>n}\|a_k\| \lambda_k^{N+\alpha} = \frac{1}{\lambda_{n+1}^\alpha}\|a_{> n}\|_{N+\alpha}.
\end{aligned}
\end{equation*}
\end{proof}

Next we estimate the exponentials of heads for self-adjoint $a$ in $\mathcal{A}$.
\begin{lem}\label{lemma2}
If $a\in A_\textrm{RD}$ and self-adjoint then 
$$\|e^{ia_{\leq n}}\|_{N }\le \lambda_n^{N+1}.$$
\end{lem}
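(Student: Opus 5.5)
The plan is to observe that $e^{ia_{\leq n}}$ lies in the C$^*$-subalgebra $A_n$ and is unitary, and then apply Lemma \ref{Nnorm_est_in_A_n}.

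First, $a$ is self-adjoint. I would check that its head $a_{\leq n}$ is self-adjoint as well. Self-adjointness of the head is cleanest if the bimodular projections $\mathcal{E}_j$ are $*$-preserving. This holds automatically when $\Omega=1$ by Tomiyama's theorem. In general I would either argue it from uniqueness of the expansion, since $a^*=\sum a_n^*$ with $a_n^*\in B_n$ whenever $\ker E_{n-1}$ is $*$-invariant, or simply note it as the implicit standing assumption. This is the one point I expect to need care.

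Granting that, $a_{\leq n}\in A_n$ is self-adjoint. Since $A_n$ is a C$^*$-subalgebra, the exponential series $\sum_k (ia_{\leq n})^k/k!$ converges in norm inside $A_n$. Hence $u=e^{ia_{\leq n}}\in A_n$, and $u$ is unitary, so $\|u\|=1$ in the C$^*$-norm. (If $A_n$ is not assumed unital, the unit of $A$ lies in $A_0\subseteq A_n$ because $A_\textrm{RD}$ is unital; otherwise we work with $u-1$ and the unit separately.)

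Now Lemma \ref{Nnorm_est_in_A_n} applies directly to $u\in A_n$:
\begin{equation*}
\|e^{ia_{\leq n}}\|_N\le \|e^{ia_{\leq n}}\|\,\lambda_n^{N+1}=\lambda_n^{N+1}.
\end{equation*}
The essential point is that although the RD norm of the exponential could a priori grow with the size of $a_{\leq n}$, membership in $A_n$ caps the RD norm by a multiple $\lambda_n^{N+1}$ of the C$^*$-norm. The C$^*$-norm of a unitary is $1$, independent of $a$.
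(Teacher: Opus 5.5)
Your argument is the paper's: the head $a_{\le n}$ is self-adjoint, so $e^{ia_{\le n}}$ is a unitary in $A_n$, and Lemma \ref{Nnorm_est_in_A_n} gives $\|e^{ia_{\le n}}\|_N\le \|e^{ia_{\le n}}\|\,\lambda_n^{N+1}=\lambda_n^{N+1}$. The paper simply asserts that the head is self-adjoint and that $e^{ia_{\le n}}\in A_n$, so your caveats about $*$-preserving projections and the unit lying in $A_n$ are fair; however, your parenthetical claim that unitality of $A_\textrm{RD}$ forces $1\in A_0$ does not follow, since $1\in A_\textrm{RD}$ only means $1=\sum_j 1_j$ with $1_j\in B_j$, not that $1_j=0$ for $j\ge1$, so state it as a standing assumption instead.
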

\begin{proof}
Since $a$ is self-adjoint, it follows that $a_{\le n}$ is self-adjoint.  Moreover, since $A_n$ is an algebra, we have that $e^{ia_{\le n}}\in A_n$ and thus the result follows from Lemma \ref{Nnorm_est_in_A_n}.
\end{proof}

Products of heads and tails are estimated as follows.

\begin{lem}\label{lemma3}
If $a,b\in A_\textrm{RD}$ then 
$$\|a_{\leq n}b_{> n}\|_{N} \le \|a_{\leq n}\|_{0} \|b_{> n}\|_{N}\quad\textrm{and}\quad \|b_{>n} a_{\le n}\|_{N} \le \|a_{\leq n}\|_{0} \|b_{> n}\|_{N}.$$
\end{lem}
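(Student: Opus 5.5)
The plan is to expand both factors in their decompositions $a_{\le n}=\sum_{k\le n}a_k$ and $b_{>n}=\sum_{m>n}b_m$ with $a_k\in B_k$, $b_m\in B_m$, and write
\begin{equation*}
a_{\le n}b_{>n}=\sum_{k\le n}\sum_{m>n}a_kb_m .
\end{equation*}
Every pair of indices here satisfies $m>n\ge k$, so $m>k$. By Lemma \ref{inBnm}, each product $a_kb_m$ lies in $B_m$. Symmetrically, $b_ma_k\in B_m$, using the other half of the bimodule property: $E_{m-1}(b_ma_k)=E_{m-1}(b_m)a_k=0$, since $a_k\in A_k\subseteq A_{m-1}$. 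The essential point is that the tail index strictly dominates, so no diagonal terms appear. Those diagonal terms are what cost the extra power $\lambda_n^{N+1}$ in the submultiplicativity proof, and here they are absent.

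Next I group by $m$. For fixed $m>n$, set $c_m:=a_{\le n}b_m=\sum_{k\le n}a_kb_m$. This is a finite sum of elements of $B_m$, so $c_m\in B_m$. By uniqueness of the RD expansion (the components are recovered by the projections $\mathcal{E}_j$), the $B_m$-component of $a_{\le n}b_{>n}$ is exactly $c_m$ for $m>n$, and all other components vanish. I also need the series $\sum_{m>n}c_m$ to converge in $A$ to $a_{\le n}b_{>n}$, so that the components may be read off termwise. This follows from norm convergence of $\sum_m b_m$ and continuity of left multiplication by $a_{\le n}$. Continuity of $\mathcal{E}_j$ then justifies $\mathcal{E}_j$ commuting with the sum.

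The estimate itself is then direct:
\begin{equation*}
\|a_{\le n}b_{>n}\|_N=\sum_{m>n}\|c_m\|\lambda_m^N\le\sum_{m>n}\|a_{\le n}\|\,\|b_m\|\lambda_m^N\le \|a_{\le n}\|_0\,\|b_{>n}\|_N .
\end{equation*}
Here I use $\|a_{\le n}\|\le\sum_{k\le n}\|a_k\|=\|a_{\le n}\|_0$. The bound for $b_{>n}a_{\le n}$ is identical, with $c_m$ replaced by $b_ma_{\le n}\in B_m$.

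The main step requiring care is the identification of the RD components of the product, namely that the $B_m$-part of $a_{\le n}b_{>n}$ is precisely $a_{\le n}b_m$. Triangle inequality on the raw double sum alone would not suffice in general, because $\|\cdot\|_N$ is defined through components, not as a sup over representations. Grouping by $m$ and invoking uniqueness of the expansion resolves this. I would also note that $\lambda_m^N\ge$ anything is not needed, so the inequality holds for all $N\ge0$.
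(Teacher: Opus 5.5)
Your proof is correct and is essentially the paper's argument: expand $a_{\le n}b_{>n}=\sum_{k\le n}\sum_{l>n}a_kb_l$, use Lemma \ref{inBnm} to place each $a_kb_l$ (and each $b_la_k$) in $B_l$, and then estimate component by component. Your version is slightly more careful than the paper's. You group by $m$ so that the $B_m$-component is exactly $a_{\le n}b_m$, and you justify the infinite sum via continuity of $\mathcal{E}_j$; the paper's first step, $\|a_{\le n}b_{>n}\|_N=\sum_{k,l}\|a_kb_l\|\lambda_l^N$, holds in general only with $\le$.
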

\begin{proof}
First observe that
\begin{equation*}
a_{\le n}b_{>n}=\sum_{k=0}^na_k\sum_{l>n}b_l = \sum_{k=0}^n\sum_{l>n}a_kb_l
\end{equation*}
and that $a_kb_l\in B_l$ by Lemma \ref{inBnm}. Thus, by this observation we have that
\begin{equation*}
\|a_{\le n}b_{>n}\|_N =\sum_{k=0}^n\sum_{l>n}\|a_kb_l\|\lambda_l^N\le\sum_{k=0}^n\|a_k\|\sum_{l>n}\|b_l\|\lambda_l^N=\|a_{\le n}\|_0\|b_{>n}\|_N\,.
\end{equation*}
A similar argument works for the other inequality.
\end{proof}

The key step in estimating sums of heads and tails is the use of Trotter's formula which separates the corresponding exponentials.

\begin{lem}\label{lemma4}
If $a\in A_\textrm{RD}$ and self-adjoint then 
\begin{equation*}
\|e^{ita}\|_{N} \le \lambda_n^{N+1} \exp\left(|t|\lambda_n^2 \|a_{> n}\|_{N}\right)
\end{equation*}
for every natural $n$ and $N\ge1$.
\end{lem}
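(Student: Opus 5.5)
We plan to prove the bound via the Duhamel (Dyson) expansion, which separates the head and tail exponentials as Trotter's formula does but is easier to estimate term by term. Fix $n$ and write $h=a_{\le n}$ and $\tau=a_{>n}$. Both are self-adjoint, because the bimodular projections $\mathcal{E}_j$ are expected to commute with the involution; otherwise we work with the self-adjoint head $h$ and set $\tau:=a-h$. In the Banach completion of $A_\textrm{RD}$ with respect to $\|\cdot\|_N$, which is a Banach algebra for $N\ge1$ by submultiplicativity, we use
\begin{equation*}
e^{it(h+\tau)}=\sum_{k=0}^\infty i^k\int_{0\le s_k\le\cdots\le s_1\le t} e^{i(t-s_1)h}\,\tau\, e^{i(s_1-s_2)h}\,\tau\cdots\tau\, e^{is_kh}\,ds_k\cdots ds_1 .
\end{equation*}
Here $e^{ita}$ is defined by its power series, which converges in every $\|\cdot\|_N$. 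The identity is the standard Dyson formula, valid in any Banach algebra, and it agrees with the C$^*$-algebraic exponential since $\|\cdot\|\le\|\cdot\|_0\le\|\cdot\|_N$. The integrands are continuous in $\|\cdot\|_N$: the factors $e^{ish}$ lie in $A_n$, where $\|\cdot\|_N$ and $\|\cdot\|$ are equivalent by Lemma \ref{Nnorm_est_in_A_n}, and $\tau$ is a fixed element.

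Next we estimate the $k$-th term. Group the integrand as $e^{i(t-s_1)h}\cdot\prod_{j=1}^k\bigl(\tau e^{i(s_j-s_{j+1})h}\bigr)$, with $s_{k+1}=0$.

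The leading factor lies in $A_n$ and has C$^*$-norm $1$. By submultiplicativity and Lemma \ref{Nnorm_est_in_A_n} it contributes at most $\lambda_n^{N+1}$. Note that Lemma \ref{lemma2} applies for every real multiple of $h$, since such a multiple is again a self-adjoint head.

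Each bracket $\tau e^{ish}$ is a product of a tail with an element of $A_n$. Expanding $e^{ish}$ in its components in $B_0,\dots,B_n$ and using Lemma \ref{inBnm}, the bracket is again a tail. The second inequality of Lemma \ref{lemma3} gives $\|\tau e^{ish}\|_N\le\|e^{ish}\|_0\|\tau\|_N\le\lambda_n\|\tau\|_N$, where the last step is Lemma \ref{lemma2} with $N=0$.

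Submultiplicativity ($N\ge1$) then bounds the product of the $k$ brackets by $(\lambda_n\|\tau\|_N)^k$. The simplex has volume $|t|^k/k!$, so the $k$-th term is at most $\lambda_n^{N+1}(|t|\lambda_n\|\tau\|_N)^k/k!$. Summing over $k$ gives
\begin{equation*}
\|e^{ita}\|_N\le\lambda_n^{N+1}\exp\left(|t|\lambda_n\|a_{>n}\|_N\right)\le \lambda_n^{N+1}\exp\left(|t|\lambda_n^2\|a_{>n}\|_N\right),
\end{equation*}
where the last step uses $\lambda_n\ge2\Omega\ge1$.

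The main obstacle is the bookkeeping that keeps the tails tails. A product of two tails need not be a tail, since $B_lB_l\subseteq A_l$ only. Consequently we must never multiply a head against a general product of tails and invoke Lemma \ref{lemma3}. The grouping above avoids this: each head factor to the right of a $\tau$ is absorbed into that $\tau$ through Lemma \ref{lemma3}, and only the single leftmost head is treated crudely through Lemma \ref{Nnorm_est_in_A_n}.

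A secondary technical point is justifying the Dyson formula and the integral estimate in the $\|\cdot\|_N$-completion. If one prefers Trotter's product formula $e^{it(h+\tau)}=\lim_k\bigl(e^{ith/k}e^{it\tau/k}\bigr)^k$ instead, the same grouping works. In that case one writes $e^{it\tau/k}=1+(e^{it\tau/k}-1)$ and expands, which yields the same bound. One then also needs lower semicontinuity of $\|\cdot\|_N$ under C$^*$-limits, or convergence of the Trotter product in $\|\cdot\|_N$.
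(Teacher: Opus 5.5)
Your argument is correct, but it reaches the bound by a different route from the paper's.

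The paper uses the Lie--Trotter formula in the interaction picture, $(e^{ith/j}e^{it\tau/j})^j=\prod_{l=1}^j\exp\bigl(\tfrac{it}{j}e^{ilth/j}\tau e^{-ilth/j}\bigr)\,e^{ith}$. It estimates each conjugated tail by applying Lemma~\ref{lemma3} on both sides, which costs $\|e^{ish}\|_0\|e^{-ish}\|_0\le\lambda_n^2$; this is where the $\lambda_n^2$ in the statement comes from. Your Dyson expansion attaches each head exponential only to the $\tau$ on its left, so Lemma~\ref{lemma3} is used once per $\tau$. You therefore get the sharper bound $\lambda_n^{N+1}\exp(|t|\lambda_n\|a_{>n}\|_N)$. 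The difference is harmless, though it would allow $\alpha=2$ instead of $\alpha=3$ in the PBE theorem.

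Your route also avoids a subtlety the paper passes over. Since $\|1\|_N\ge\lambda_0^N\ge 2^N>1$ for $N\ge1$, the bound $\|e^{y}\|_N\le e^{\|y\|_N}$ fails already for $y=0$. So the paper's displayed estimate of the $j$-fold product cannot come from applying submultiplicativity factor by factor, which would give a bound that blows up as $j\to\infty$. The estimate is true, but only via the expansion $e^{y}=1+(e^{y}-1)$ and summation over subsets, which is exactly the repair you sketch in your last paragraph. In your series no bare unit ever appears: the $k=0$ term is $e^{ith}$, and every other term begins with a head exponential.

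The price is that you need Banach-space-valued integrals and the Dyson formula in the $\|\cdot\|_N$-completion, rather than just the Trotter limit. Both are standard in a unital Banach algebra, and your continuity remark via Lemma~\ref{Nnorm_est_in_A_n} supplies what is needed. Finally, neither argument needs $\tau$ to be self-adjoint; only $h$ must be, and the paper assumes that too in Lemma~\ref{lemma2}. So your hedging about the involution is unnecessary.
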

\begin{proof}
By Trotter's formula we have that
\begin{equation*}
e^{ita} = \lim\limits_{j\to\infty} \left(e^{ita_{\leq n}/j}e^{ita_{> n}/j}\right)^j,
\end{equation*}
and note that convergence is in any submultiplicative norm on the algebra.

The terms in Trotter's formula can be written as 
\begin{equation*}\begin{aligned}
\left(e^{ita_{\leq n}/j} e^{ita_{> n}/j}\right)^j &= \left(\prod_{l=1}^j e^{ilta_{\le n}/j}e^{ita_{>n}/j} e^{-ilta_{\le n}/j}\right) e^{ita_{\le n}} \\&=\left(\prod_{l=1}^j \exp\left(\frac{it}{j} e^{ilta_{\le n}/j} a_{>n} e^{-ilta_{\le n}/j}\right)\right) \exp\left(ita_{\le n}\right).
\end{aligned}
\end{equation*}

By submultiplicativeness of the $N$-norms for $N\ge1$, we have that
\begin{equation*}
\begin{aligned}
\left\|\left(e^{ita_{\leq n}/j}e^{ita_{> n}/j}\right)^j\right\|_{N}  &\le
\left(\prod_{l=1}^j \exp\left\|\frac{t}{j} e^{ilta_{\le n}/j} a_{>n} e^{-ilta_{\le n}/j}\right\|_{N}\right) \left\|\exp\left(ita_{\le n}\right)\right\|_{N}.
\end{aligned}
\end{equation*}

By Lemma \ref{lemma3} we get 
\begin{equation*}
\left\| e^{ilta_{\le n}/j} a_{>n} e^{-ilta_{\le n}/j}\right\|_{N} \le \left\|e^{ilta_{\le n}/j}\right\|_{0} \|a_{>n}\|_{N} \left\|e^{-ilta_{\le n}/j}\right\|_{0}.
\end{equation*}

Using Lemma \ref{lemma2}, we then get 
\begin{equation*}
\begin{aligned}
\left\|\left(e^{ita_{\leq n}/j}e^{ita_{> n}/j}\right)^j\right\|_{N}  &\le \lambda_n^{N+1} \exp\left(\frac{|t|}{j} \sum_{l=1}^j \lambda_n^2\|a_{>n}\|_{N}\right)\\
&=\lambda_n^{N+1} \exp\left(|t|\lambda_n^2 \|a_{> n}\|_{N}\right).
\end{aligned}
\end{equation*}
Note that this estimate is independent of $j$ and consequently we get that 
\begin{equation*}
\|e^{ita}\|_{N} \le \lambda_n^{N+1} \exp\left(|t|\lambda_n^2 \|a_{> n}\|_{N}\right).
\end{equation*}
\end{proof}

\begin{theo}
With the above notation, the RD norms $\|\cdot\|_N$ have the PBE property.
\end{theo}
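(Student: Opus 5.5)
The plan is to optimize over the cut-off $n$ in Lemma \ref{lemma4}, choosing it depending on $|t|$ so that the exponential factor stays bounded while the prefactor $\lambda_n^{N+1}$ grows at most polynomially in $|t|$. Throughout we fix a self-adjoint $a\in A_\textrm{RD}$ and $N\ge1$. For $N=0$ we use the trivial bound $\|\cdot\|_0\le\|\cdot\|_1$, since $\lambda_n\ge 2\Omega(n+1)\ge 2$. So it suffices to treat $N\ge1$.

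First we control the tail. We apply Lemma \ref{lemma1} with $\alpha=2$ and note that $\|a_{>n}\|_{N+2}\le\|a\|_{N+2}<\infty$, since $a\in A_\textrm{RD}$. This gives
\begin{equation*}
\lambda_n^2\|a_{>n}\|_N\le \frac{\lambda_n^2}{\lambda_{n+1}^{2}}\|a\|_{N+2}\le \|a\|_{N+2},
\end{equation*}
because the $\lambda$'s are increasing. This alone is not enough, since we need decay in $n$ to compensate $|t|$. So instead we take $\alpha=3$:
\begin{equation*}
\lambda_n^2\|a_{>n}\|_N\le \frac{\lambda_n^2}{\lambda_{n+1}^{3}}\|a\|_{N+3}\le \frac{C_a}{\lambda_{n+1}},\qquad C_a:=\|a\|_{N+3}.
\end{equation*}
Substituting into Lemma \ref{lemma4} yields, for every $n$,
\begin{equation*}
\|e^{ita}\|_N\le \lambda_n^{N+1}\exp\left(\frac{|t|C_a}{\lambda_{n+1}}\right).
\end{equation*}

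Next we choose $n$. For a given $t$ we take $n=n(t)$ to be the smallest index with $\lambda_{n+1}\ge |t|$. Such an index exists because $\lambda_n\to\infty$ by \eqref{lnassumption}. Then the exponential is at most $e^{C_a}$. By minimality, either $n=0$, or $\lambda_n<|t|$. In both cases
\begin{equation*}
\lambda_n^{N+1}\le \max(\lambda_0,|t|)^{N+1}\le \lambda_0^{N+1}(1+|t|)^{N+1}.
\end{equation*}
Hence
\begin{equation*}
\|e^{ita}\|_N\le e^{\|a\|_{N+3}}\lambda_0^{N+1}(1+|t|)^{N+1},
\end{equation*}
which is the PBE estimate with $k=N+1$.

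The main point to get right is the choice of $\alpha$ in Lemma \ref{lemma1}. The factor $\lambda_n^2$ coming from Lemma \ref{lemma2} in Lemma \ref{lemma4} must be absorbed, and the remaining power of $\lambda_{n+1}$ must beat $|t|$. This is why we take $\alpha=3$ and use monotonicity, $\lambda_n\le\lambda_{n+1}$. It is also necessary that the index where $\lambda_{n+1}$ first exceeds $|t|$ has $\lambda_n$ bounded by $|t|$ rather than by some uncontrolled quantity. The minimality argument provides this, without needing any growth-rate assumption on $\lambda_n$ beyond monotonicity and unboundedness.
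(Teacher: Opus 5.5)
Your argument is correct and is essentially the paper's proof: you combine Lemma \ref{lemma4} with Lemma \ref{lemma1} at $\alpha=3$ to get $\|e^{ita}\|_N\le\lambda_n^{N+1}\exp(|t|\|a\|_{N+3}/\lambda_{n+1})$, then choose $n$ so that $\lambda_{n+1}$ first reaches $|t|$. Your minimality choice of $n$ and your reduction of $N=0$ to $N=1$ are slightly more careful than the paper, whose bracketing $\lambda_n\le|t|<\lambda_{n+1}$ has no solution when $|t|<\lambda_0$ (recall $\lambda_0\ge 2\Omega\ge 2$) and which does not treat $N=0$ explicitly.
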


\begin{proof}
By Lemmas \ref{lemma4} and \ref{lemma1}, using $\alpha=3$, we have that
\begin{equation*}
\begin{aligned}
\|e^{ita}\|_N &\le\lambda_n^{N+1}\exp{\left(|t|\lambda_n^2\|a_{>n}\|_N\right)}\le\lambda_n^{N+1}\exp{\left(|t|\frac{\lambda_n^2}{\lambda_{n+1}^3}\|a_{>n}\|_{N+3}\right)}\\
&\le \lambda_n^{N+1}\exp{\left(\frac{|t|}{\lambda_{n+1}}\|a\|_{N+3}\right)}
\end{aligned}
\end{equation*}
where we used the fact that the $\lambda_n$'s are an increasing sequence of numbers and the fact that $\|a_{>n}\|_N\le \|a\|_N$. Notice that this works for any $n$. Now, for $|t|\ge1$, we choose $n$ so that 
\begin{equation*}
\lambda_n\le |t|<\lambda_{n+1}\,.
\end{equation*}
Using this in the above inequality yields
\begin{equation*}
\|e^{ita}\|_N\le|t|^{N+1}e^{\|a\|_{N+3}}
\end{equation*}
which establishes the polynomially bounded exponential property.
\end{proof}

As mentioned in the previous section, the above theorem implies that $A_\textrm{RD}$ are smooth subalgebras of $A$.

Let $\{\lambda_n\}$ and $\{l_n\}$ be two sequences of numbers both increasing and with limits equal to infinity.  Then we can write two different types of RD norms:
\begin{equation*}
\|a\|_N = \sum_{n=0}^\infty \|a_n\|l_n^N\quad\textrm{and}\quad |a|_N =\sum_{n=0}^\infty\|a_n\|\lambda_n^N\,.
\end{equation*}
The natural question that arises is: When are the two families of these norms equivalent?

If the families of these norms are equivalent then there exist a constant $C_1$ and $N'$ depending on $N$ so that $\|a\|_N\le C_1|a|_{N'}$ for all $a\in A_\textrm{RD}$.  This says that
\begin{equation*}
\sum_{n=0}^\infty\|a_n\|l_n^N\le C_1\sum_{n=0}^\infty\|a_n\|\lambda_n^{N'}\,.
\end{equation*}
In particular, if $a_n=a$ for $n=k$ with $\|a\|=1$, and $a_n=0$ else, then $l_k^N\le C_1\lambda_k^{N'}$. So, there exist constants, $C$ and $\alpha$ depending on $N$ such that 
$$l_k\le C\lambda_k^\alpha$$ 
for all $k$.  Similarly, there exists constants $C'$ and $\alpha'$ depending on $N$ and $N'$ such that for all $k$ we have 
$$\lambda_k\le C'l_k^{\alpha'}.$$

On the other hand, if the sequences $\{\lambda_n\}$ and $\{l_n\}$ satisfy the two inequalities above, it follows that $\{\|\cdot\|_N\}$ and $\{|\cdot|_N\}$ are equivalent families of norms. In particular, if one of those families has the PBE property, so does the other. This means in particular that the condition \eqref{lnassumption} can be weaken and still lead to norms with the PBE property.

\section{Examples}
\subsection{Convergent Sequences} 
This is the easiest motivating example. 

A non-unital C$^*$-algebra in this example is the algebra of sequences convergent to zero:
\begin{equation*}
c_0:=\left\{x=(x_k)_{k\in\Z_{\geq0}}:\lim_{k\to\infty}x_k=0\right\}
\end{equation*}
equipped with coordinate-wise addition and multiplication and the supremum norm.

Subalgebras $A_n$ are defined to be the algebras of sequences that are eventually zero:
\begin{equation*}
A_n:=\{x\in c_0: x_k=0 \text{ for }k>n\}
\end{equation*}

Natural conditional expectations $E_n:A_{n+1}\to A_n$ are defined by 
\begin{equation*}
E_n(x)_k = \left\{
\begin{aligned}
&x_k &&\textrm{if }k\leq n  \\
&0 &&\textrm{else.}
\end{aligned}\right.
\end{equation*}
Consequently, subspaces $B_n=\ker E_{n-1}$ are one-dimensional and spanned by delta sequences $\delta_n$ defined by
\begin{equation*}
\delta_{n,k} = \left\{
\begin{aligned}
&1 &&\textrm{if }k=n  \\
&0 &&\textrm{else.}
\end{aligned}\right.
\end{equation*}
It follows that the Fourier decomposition of $x\in c_0$ is
\begin{equation*}
x=\sum_{n=0}^\infty x_n\delta_n\,.
\end{equation*}

We can now easily compute the RD norms:
\begin{equation*}
\|x\|_N=\sum_{n=0}^\infty |x_n|\,\lambda_n^N\,.
\end{equation*}
Thus, with the choice $\lambda_n=1+n$, the corresponding RD algebra coincides with the usual concept of rapid decay sequences.

\subsection{Odometers} 
In this example we compare the general scheme from the previous section with the results from \cite{KM2}. Some of the material in this subsection is relevant for the other examples.

\subsubsection{Definitions}

Let $S$ be an infinite supernatural number. The odometer space $\Z_S$ can be defined as an inverse limit
\begin{equation*}
\Z_S:=\lim_{\underset{s|S}{\longleftarrow}} \Z/ s\Z \,,
\end{equation*}
where the corresponding projection homomorphisms are reductions modulo $s$. It is a compact Abelian group and a Cantor space. In the special case of $S=p^\infty$ for a prime $p$ this is the space of $p$-adic integers \cite{Do}.

The Pontryagin dual space of $\Z_S$ is the corresponding direct limit and can be identified with the following discrete group:
\begin{equation*}
\widehat{\Z_S}=\{z\in\C: \textrm{ there is }s|S \textrm{ such that }z^s=1\},
\end{equation*}
see \cite{HeRo}.

We choose a scale  $s = (s_m)_{m\in\N}$ for the supernatural number $S$, which  is a sequence of positive integers such that $s_m$ divides $s_{m+1}$, $s_m<s_{m+1}$, and such that $S=\mathrm{lcm}(s_m)$. Set $s_0:=1$.
Then, $\widehat{\Z_S}$ is the direct limit of the following finite subgroups of $\widehat{\Z_S}$:
\begin{equation*}
G_m:=\{e^{2\pi ij/s_m}:j=0,\ldots,s_m-1\}.
\end{equation*}

In this example, the C$^*$-algebra $A=C(\Z_S)$ is the unital commutative algebra of continuous functions on $\Z_S$, which is isomorphic to the C$^*$-algebra of the discrete Abelian group $\widehat{\Z_S}$.

For a $z\in\widehat{\Z_S}$, let $\chi_z\in C(\Z_S)$ be the corresponding character function on $\Z_S$.  The subalgebras $A_n$, denoted below by $C_n(\Z_S)$ are defined as functions on $\Z_S$ which are given by a finite Fourier series whose sum is over the characters in $G_n$:
\begin{equation*}
C_n(\Z_S):=\left\{\sum_{z\in G_n}\hat f_z \chi_z: \hat f_z\in\C\right\}.
\end{equation*}
These subaglebras are also used in the following subsections.

Let $dx$ be the normalized Haar measure on $\Z_S$. To any continuous function $f$ on $\Z_S$ one can associate the usual Fourier series:
\begin{equation}\label{FSeries}
\sum_{z\in\widehat{\Z_S}}\hat f_z \chi_z,
\end{equation}
where
\begin{equation*}
\hat f_z=\int_{\Z_S}f(x) \chi_{\bar z}(x)\,dx
\end{equation*}
This series is not always convergent but the Fourier coefficients $\hat f_z$ determine $f$. Using this Fourier series we see that the
subalgebra $C_n(\Z_S)$ can be alternatively characterized as consisting of those continuous functions $f$ such that for every $x\in\Z_S$
\begin{equation*}
f(x+s_n)=f(x).
\end{equation*}

\subsubsection{Conditional Expectations}

Let $\alpha: C(\Z_S)\to C(\Z_S)$ be the automorphism of $C(\Z_S)$ given by 
$$\alpha f(x)=f(x+1).$$

We define $\E_n:C_{n+1}(\Z_S)\to C_{n+1}(\Z_S)$ by
\begin{equation*}
\E_n(f):=\frac{s_n}{s_{n+1}}\sum_{j=0}^{s_{n+1}/s_n-1}\alpha^{js_n}(f).
\end{equation*}
We have the following result.
\begin{prop}\label{cond_exp_cont} $\E_n$ is a conditional expectation onto $C_{n}(\Z_S)$ and
\begin{equation}\label{odoexp}
\E_n\left(\sum_{z\in G_{n+1}}\hat f_z \chi_z\right)=\sum_{z\in G_{n}}\hat f_z \chi_z
\end{equation}

\end{prop}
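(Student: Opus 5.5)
The plan is to compute $\E_n$ on characters first, derive the Fourier formula \eqref{odoexp}, and then read off the conditional expectation properties from it.

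\textbf{Action on a character.} For $z\in\widehat{\Z_S}$ we have $\alpha\chi_z=\chi_z(1)\chi_z=z\chi_z$, since $\chi_z(x+1)=\chi_z(x)\chi_z(1)$ and $\chi_z(1)=z$. Therefore
\begin{equation*}
\E_n(\chi_z)=\Big(\frac{s_n}{s_{n+1}}\sum_{j=0}^{s_{n+1}/s_n-1} w^{j}\Big)\chi_z,\qquad w:=z^{s_n}.
\end{equation*}

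\textbf{Evaluating the geometric sum.} Take $z\in G_{n+1}$. Then $w^{s_{n+1}/s_n}=z^{s_{n+1}}=1$, so $w$ is a root of unity of order dividing $r:=s_{n+1}/s_n$. The normalized sum $\frac1r\sum_{j=0}^{r-1}w^j$ equals $1$ if $w=1$ and $0$ otherwise. Moreover, $w=1$ exactly when $z^{s_n}=1$, that is, when $z\in G_n$. Extending by linearity over the finite sum gives \eqref{odoexp}. In particular $\E_n$ maps $C_{n+1}(\Z_S)$ onto $C_n(\Z_S)$ and is the identity on $C_n(\Z_S)$, so it is a projection onto $C_n(\Z_S)$.

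\textbf{Conditional expectation properties.} Each $\alpha^{js_n}$ is a $*$-automorphism, hence positive and unital. An average of such maps is therefore a positive, unital, contractive linear map with $\|\E_n\|=1$. Combined with the projection property, Tomiyama's theorem already yields bimodularity. I would nevertheless verify bimodularity directly, since it is short: if $g\in C_n(\Z_S)$, then $g(x+s_n)=g(x)$, so $\alpha^{js_n}g=g$. Since the $\alpha^{js_n}$ are multiplicative,
\begin{equation*}
\alpha^{js_n}(fg)=\alpha^{js_n}(f)\,g,
\end{equation*}
and averaging over $j$ gives $\E_n(fg)=\E_n(f)g$. The same argument gives $\E_n(gf)=g\,\E_n(f)$.

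\textbf{Main obstacle.} The only delicate point is the root-of-unity bookkeeping: one must show that $z^{s_n}$ is an $r$-th root of unity, and that it equals $1$ exactly for $z\in G_n$. Both follow from $s_n\mid s_{n+1}$ together with the definition $G_m=\{z: z^{s_m}=1\}$.
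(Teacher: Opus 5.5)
Your proposal is correct and follows essentially the same route as the paper: you compute $\E_n$ on characters via $\alpha\chi_z=z\chi_z$ and the geometric sum of $z^{js_n}$, then conclude via contractivity and Tomiyama's theorem. Your direct check of bimodularity, using $\alpha^{js_n}g=g$ for $g\in C_n(\Z_S)$, is a small self-contained supplement to the paper's appeal to Tomiyama, not a different argument.
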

\begin{proof}
First note that applying $\alpha$ to a character function $\chi_z$ yields 
\begin{equation*}
\alpha \chi_z(x)=\chi_z(x+1) = \chi_z(x)\chi_z(1) = z\chi_z(x).
\end{equation*}
We compute $\E_n$ on the character functions:
\begin{equation*}
\begin{aligned}
\E_n\chi_z &= \dfrac{s_n}{s_{n+1}} \sum_{j=0}^{s_{n+1}{s_n}-1} \alpha^{js_n}\chi_z \\
&=\dfrac{s_n}{s_{n+1}} \sum_{j=0}^{s_{n+1}/s_n-1} z^{js_n}\chi_z = \dfrac{s_n}{s_{n+1}}\left(\sum_{j=0}^{s_{n+1}/s_n-1} z^{js_n}\right)\chi_z.
\end{aligned}
\end{equation*}
If $z^{s_n}=1$, then $z\in G_n$ and so $z^{js_n}=1$.  Then the above is equal to 
\begin{equation*}
\dfrac{s_n}{s_{n+1}} \left(\sum_{j=0}^{s_{n+1}/s_n-1} 1\right) \chi_z = \chi_z.
\end{equation*}
On the other hand, if $z^{s_n}\ne 1$ then $z\in G_{n+1}\setminus G_n$ and so
\begin{equation*}
\begin{aligned}
\E_n\chi_z &= \dfrac{s_n}{s_{n+1}} \left(\dfrac{1-z^{\frac{s_{n+1}}{s_n}s_n}}{1-z^{s_n}}\right)\chi_z \\
&= \dfrac{s_n}{s_{n+1}} \dfrac{1-z^{s_{n+1}}}{1-z^{s_n}}\chi_z = 0.
\end{aligned}
\end{equation*}

We also get the following norm estimate:
\begin{equation*}
\begin{aligned}
\|\E_nf\| = \left\| \dfrac{s_n}{s_{n+1}} \sum_{j=0}^{s_{n+1}/s_n-1} \alpha^{js_n}f\right\| &\le  \dfrac{s_n}{s_{n+1}} \sum_{j=0}^{s_{n+1}/s_n-1} \left\|\alpha^{js_n}f\right\|\\
&= \dfrac{s_n}{s_{n+1}} \sum_{j=0}^{s_{n+1}/s_n-1} \|f\| = \dfrac{s_n}{s_{n+1}} \dfrac{s_{n+1}}{s_n}\|f\| = \|f\|.
\end{aligned} 
\end{equation*}
Tomiyama's Theorem tells us that $\E_n$ is a conditional expectation.
\end{proof}

It follows from formula \eqref{odoexp} that
\begin{equation*}
B_n(S):=\ker \E_{n-1}=\left\{\sum_{z\in G_n\setminus G_{n-1}}\hat f_z \chi_z: \hat f_z\in\C\right\}.
\end{equation*}
Thus, if $f$ is given by the Fourier series \eqref{FSeries}, we have
\begin{equation*}
\|f\|_N=\sum_{n=0}^\infty \|f_n\|\,\lambda_n^N=\sum_{n=0}^\infty\left\|\sum_{z\in G_n\setminus G_{n-1}}\hat f_z \chi_z\right\|\,\lambda_n^N.
\end{equation*}

Our general scheme from the previous section implies that the algebra:
\begin{equation*}
C_\textrm{RD}(\Z_S) := \left\{f=\sum_{n=0}^\infty f_n: f_n\in B_n(S)\,,\,\|f\|_N = \sum_{n=0}^\infty\|f_n\|\lambda_n^N<\infty\,,N=0,1,\ldots\right\}
\end{equation*}
is a smooth subalgebra of $C(\Z_S)$.

\subsubsection{Alternative Norms}
First we summarize relevant results from \cite{KM2}.

Non-Archimedean length functions on $\widehat{\Z_S}$ are defined as functions $\lambda: \widehat{\Z_S}\to [1,\infty)$ satisfying the following conditions:
\begin{enumerate}
\item $\lambda(z)=1$ iff and only if $z=1$ (normalization),
\item $\lambda(z_1z_2)\leq\text{max}\{\lambda(z_1), \lambda(z_2)\}$ (non-Archimedean property),
\item For every $r\geq 1$ the set $\{z\in\widehat{\Z_S}: \lambda(z)\leq r\}$ is finite (growth condition).
\end{enumerate}

The basic examples of length functions are constructed as follows. First choose a scale  $s = (s_m)_{m\in\N}$ and then choose an increasing sequence $l = (\lambda_m)_{m\in\N}$ of numbers in $(1,\infty)$ such that $\lim_{m\to\infty}\lambda_m=\infty$. Set $\lambda_0:=1$. Given $s$ and $l$ the corresponding length function $\lambda_{s,l}$ is defined as follows. If $z=1$ we set $\lambda_{s,l}(1):=\lambda_0=1$. If $z\ne 1$ we set:
\begin{equation*}
\lambda_{s,l}(z)=\lambda_{s,l}\left(e^{\frac{2\pi ij}{s_m}}\right):=\lambda_m,
\end{equation*}
for all $m\in \N$ and $0<j<s_m$ such that $\frac{s_m}{s_{m-1}}$ does not divide $j$. 
The assumption that $\lim_{m\to\infty}\lambda_m=\infty$ implies the growth property of $\lambda_{s,l}$.
It turns out that $\lambda_{s,l}$ encompass all possible non-Archimedean length functions on $\widehat{\Z_S}$.

Alternative norms on the space of series \eqref{FSeries} are defined as follows \cite{KM2}:
\begin{equation*}
\|f\|_N^*:=\sum_{z\in\widehat{\Z_S}}|\hat f_z|\lambda(z)^N.
\end{equation*}

We say that a non-Archimedean length function $\lambda$ {\it grows fast enough} if there are positive numbers $\beta$ and $c$ such that for every $m$ we have:
\begin{equation}\label{fast_enough}
s_m\leq c\,(\lambda_m)^\beta.
\end{equation}

Under this stronger growth condition on length functions, namely that a non-Archimedean length function $\lambda$ grows fast enough, it was proved in \cite{KM2} that the above norms have the PBE property.

We have the following result.
\begin{prop} Assuming that $\lambda$ grows fast enough and the same sequence $(\lambda_m)_{m\in\N}$ is used in the definition of $\{\|\cdot\|_N\}$, the two sets of norms $\{\|\cdot\|_N\}$ and  $\{\|\cdot\|^*_N\}$ are equivalent.
\end{prop}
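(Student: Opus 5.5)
Both norms are sums over the dyadic-like shells $G_n\setminus G_{n-1}$. On the $n$-th shell the length function is constant: $\lambda(z)=\lambda_n$ for $z\in G_n\setminus G_{n-1}$, with $\lambda_0=1$ on $G_0=\{1\}$. So the whole comparison reduces to comparing, for a single $f_n=\sum_{z\in G_n\setminus G_{n-1}}\hat f_z\chi_z\in B_n(S)$, the sup norm $\|f_n\|$ with the $\ell^1$ norm of its coefficients $\sum_{z\in G_n\setminus G_{n-1}}|\hat f_z|$. Indeed
\begin{equation*}
\|f\|_N^*=\sum_{n\ge0}\lambda_n^N\sum_{z\in G_n\setminus G_{n-1}}|\hat f_z|.
\end{equation*}
The plan is to prove two inequalities between $\|\cdot\|_N$ and $\|\cdot\|^*_N$, losing at most a fixed power of $\lambda_n$ in one direction. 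Equivalence then follows by shifting the index $N$.

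First direction, $\|f\|_N\le\|f\|_N^*$. Since $|\chi_z|=1$, the triangle inequality gives
\begin{equation*}
\|f_n\|\le\sum_{z\in G_n\setminus G_{n-1}}|\hat f_z|.
\end{equation*}
Multiplying by $\lambda_n^N$ and summing over $n$ gives the claim directly.

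Second direction. Each coefficient is bounded by the sup norm: $|\hat f_z|=\left|\int f_n\chi_{\bar z}\,dx\right|\le\|f_n\|$, because Haar measure is normalized and the shells are disjoint. The shell has at most $s_n$ elements, so
\begin{equation*}
\sum_{z\in G_n\setminus G_{n-1}}|\hat f_z|\le s_n\|f_n\|\le c\,\lambda_n^\beta\|f_n\|,
\end{equation*}
where the last step uses the fast-enough growth condition \eqref{fast_enough}. Summing with weights $\lambda_n^N$ yields
\begin{equation*}
\|f\|_N^*\le c\,\|f\|_{N+\lceil\beta\rceil},
\end{equation*}
using $\lambda_n\ge1$ so that $\lambda_n^\beta\le\lambda_n^{\lceil\beta\rceil}$. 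The $n=0$ term is trivial.

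It remains to check that the two families of norms define the same space of series. Finiteness of either family forces absolute convergence of the Fourier series, so both spaces consist of continuous functions with the same decomposition $f=\sum f_n$, and the inequalities above then give mutual domination. One wrinkle concerns the standing hypothesis \eqref{lnassumption}, $\lambda_n\ge 2\Omega(n+1)=2(n+1)$; here $\Omega=1$ by Proposition \ref{cond_exp_cont}. It may not hold for the given sequence, but the norms $\|\cdot\|_N$ are defined for any increasing sequence, and the comparison above does not use it. I expect the only real obstacle to be this bookkeeping: making sure the coefficient bound $|\hat f_z|\le\|f_n\|$ is applied with $f_n$ rather than $f$, and that the power loss $s_n\le c\lambda_n^\beta$ is absorbed by raising $N$. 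Without \eqref{fast_enough} this second direction would fail.
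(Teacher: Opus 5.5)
Your proof is correct and is essentially the paper's argument. The triangle inequality gives $\|f\|_N\le\|f\|_N^*$, and the shell-wise bound $|\hat f_z|\le\|f_n\|$, applied to at most $s_n\le c\lambda_n^\beta$ characters per shell, gives $\|f\|_N^*\le c\,\|f\|_{N+\beta}$; your use of $\lceil\beta\rceil$ to keep the index an integer and your remark that \eqref{lnassumption} plays no role are minor refinements of the same proof.
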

\begin{proof}
Notice that
\begin{equation*}
\|f_n\|=\left\|\sum_{z\in G_n\setminus G_{n-1}}\hat f_z \chi_z\right\|\leq \sum_{z\in G_n\setminus G_{n-1}}|\hat f_z|.
\end{equation*}
It follows immediately that $\|f\|_N\le \|f\|_N^*$.  

On the other hand, for $z\in G_n\setminus G_{n-1}$ by definition we get
\begin{equation*}
\hat{f}_z = \int_{\Z_S}f_n(x)\overline{\chi_z}(x)\,dx\,,
\end{equation*}
and so $|\hat{f}_z|\le\|f_n\|$. Therefore, we have that
\begin{equation*}
\begin{aligned}
\|f\|_N^*&\le \sum_{n=0}^\infty\sum_{z\in G_n\setminus G_{n-1}}\|f_n\|\,\lambda_n^N = \sum_{n=0}^\infty(s_n-s_{n-1})\|f_n\|\lambda_n^N\\
&\le \sum_{n=0}^\infty s_n\|f_n\|\lambda_n^N \le const\sum_{n=0}^\infty\|f_n\|\lambda_n^{N+\beta} = const\|f\|_{N+\beta}\,.
\end{aligned}
\end{equation*}
Thus, the two families of norms are equivalent.

\end{proof}

\subsection{Super Dihedral Groups}

\subsubsection{Definitions}
The dihedral group $D_n$ is the semidirect product of $\Z/2\Z$ acting on the cyclic group
$\Z/n\Z$ via the automorphism 
\begin{equation*}
\varkappa (g)=g^{-1}
\end{equation*}
for $g\in \Z/n\Z$. 
For any Abelian group $G$, the generalized dihedral group of $G$, denoted Dih$(G)$, is the semidirect product $\mathrm {Dih} (G)=G\rtimes _{\varkappa }\Z/2\Z$ of $G$ and $\Z/2\Z$ acting on the group
$G$ via the above inversion automorphism $\varkappa$. 
Specializing this definition to discrete Abelian groups $\widehat{\Z_S}$, we define super dihedral groups:
\begin{equation*}
\mathrm {Dih} (\widehat{\Z_S})=\widehat{\Z_S}\rtimes _{\varkappa }\Z/2\Z.
\end{equation*}
The main objects of study in this example are the group C$^*$-algebras of super dihedral groups:
\begin{equation*}
SD(S):=C^*(\mathrm {Dih} (\widehat{\Z_S})).
\end{equation*}

We start by discussing the structure of the C$^*$-algebra $SD(S)$. First recall a few facts on amenability of discrete groups. A good source on amenability is \cite{Pat}. 
Finite groups are amenable as are direct limits of amenable groups. In particular, since $\widehat{\Z_S}$ is the direct limit of finite cyclic groups, it is amenable. 
A group extension of an amenable group by an amenable group is again amenable. In particular, super dihedral groups are amenable.

The reduced group C$^*$-algebra of a group $G$ is isomorphic to the non-reduced group C$^*$-algebra defined above if and only if $G$ is amenable. Thus, if $G$ is a super dihedral group, then $$SD(S)=C^*(\mathrm {Dih} (\widehat{\Z_S})) \cong C_r^*(\mathrm {Dih} (\widehat{\Z_S})).$$

By example 3.16 (Semidirect Products of Abelian Groups) of \cite{Williams} we have $SD(S)$ is isomorphic with the crossed product algebra:
\begin{equation*}
SD(S)\cong C(\Z_S)\rtimes_\kappa\Z/2\Z\,.
\end{equation*} 
Here $\kappa: C(\Z_S)\to C(\Z_S)$ is given by
\begin{equation*}
\kappa(f)(x)=f(-x).
\end{equation*}

Explicitly, by universality of crossed products, $SD(S)$ can be described as the universal C$^*$-algebra with generators and relations as follows:
\begin{equation*}
SD(S)\cong C^*(f,v: f\in C(\Z_S), v^*v=vv^*=v^2=I, vfv=\kappa(f)).
\end{equation*}
In particular, we have $v^*=v$.

We define the automorphism $\gamma: SD(S)\to SD(S)$, initially on generators, by
\begin{equation*}
\gamma(f)=f\ \text{ and }\ \gamma(v)=-v.
\end{equation*}
Since it preserves the relations in $SD(S)$ it extends to an automorphism on the whole algebra. 

It follows from the relations that any element $a$ of $SD(S)$ can be written in the form
\begin{equation*}
a=f+vg,
\end{equation*}
with $f,g\in C(\Z_S)$. We can use the automorphism $\gamma$ to see that this representation is unique since
\begin{equation*}
a+\gamma(a)=2f\ \text{ and }\ v(a-\gamma(a))=2g.
\end{equation*}



We define natural subalgebras $A_n$ of $SD(S)$ by 
\begin{equation*}
    A_n=\{f+vg: f,g\in C_n(\Z_S)\}\,.
\end{equation*}
Since the union $\bigcup_{n\in\Z_{\geq0}}C_n(\Z_S)$ is dense in $C(\Z_S)$, the union
$\bigcup_{n\in\Z_{\geq0}}A_n$ is dense in $SD(S)$.

\subsubsection{Bimodular Projections}

Recall that $\E_n:C_{n+1}(\Z_S)\to C_n(\Z_S)$ is a conditional expectation onto $C_n(\Z_S)$.  Define $E_n:A_{n+1}\to A_n$ by
\begin{equation*}
E_n(a) = \E_n(f)+v\E_n(g)\,.
\end{equation*}

We have the following proposition:
\begin{prop}
For any $n$, $E_n$ is a bimodular projection onto $A_n$ and $E_n(a^*)=\left(E_n(a)\right)^*$.  Moreover, if $m>n$, then $\|E_{m,n}\|\le 2$ where $E_{m,n}=E_n\circ\cdots\circ E_{m-1}$.
\end{prop}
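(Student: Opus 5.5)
Everything will be reduced to properties of the conditional expectations $\E_n$ on $C(\Z_S)$ from Proposition \ref{cond_exp_cont}, together with the uniqueness of the decomposition $a=f+vg$ established via $\gamma$. First, $E_n$ is well defined on $A_{n+1}$ because the decomposition is unique, and $E_n$ is linear. It is a projection onto $A_n$: if $f,g\in C_n(\Z_S)$ then $\E_n f=f$ and $\E_n g=g$, and the range of $\E_n$ is $C_n(\Z_S)$.

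For the bimodule property, I will take $a=f+vg\in A_{n+1}$ and $b=h+vk\in A_n$, and multiply using $vh=\kappa(h)v$, $v^2=I$, and commutativity of $C(\Z_S)$ to put $ab$ and $ba$ into normal form. For example,
\begin{equation*}
ab=(fh+\kappa(g)k)+v(\kappa(f)k+gh).
\end{equation*}
The needed facts are that $\E_n$ is $C_n(\Z_S)$-linear (it is a conditional expectation on a commutative algebra) and that $\E_n$ commutes with $\kappa$. The second fact follows from the formula: $\kappa\alpha^{js_n}=\alpha^{-js_n}\kappa$, and reindexing $j\mapsto -j$ modulo $s_{n+1}/s_n$ permutes the averaged terms because $\alpha^{s_{n+1}}$ acts trivially on $C_{n+1}(\Z_S)$. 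Alternatively, on characters $\kappa\chi_z=\chi_{\bar z}$, and $G_n$ is closed under inversion, so \eqref{odoexp} commutes with $\kappa$. Also $\kappa$ preserves $C_n(\Z_S)$. With these, $E_n(ab)=E_n(a)b$ and $E_n(ba)=bE_n(a)$ follow term by term.

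For $*$-compatibility I will write $a^*=\bar f+\bar g v=\bar f+v\kappa(\bar g)$. Then I will use that $\E_n$ commutes with complex conjugation (it is positive, or directly from the averaging formula) and with $\kappa$.

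For the norm bound, I note that $E_{m,n}(f+vg)=\mathcal{E}(f)+v\mathcal{E}(g)$, where $\mathcal{E}=\E_n\circ\cdots\circ\E_{m-1}$ is a composition of contractions and hence $\|\mathcal{E}\|\le1$. The main point is to bound $\|f\|,\|g\|\le\|a\|$. Since $\gamma$ is a $*$-automorphism, it is isometric, so $\|f\|=\tfrac12\|a+\gamma(a)\|\le\|a\|$. Similarly $\|g\|=\tfrac12\|v(a-\gamma(a))\|\le\|a\|$, since $v$ is unitary. Hence
\begin{equation*}
\|E_{m,n}(a)\|\le\|\mathcal{E}f\|+\|v\|\|\mathcal{E}g\|\le\|f\|+\|g\|\le2\|a\|.
\end{equation*}
The only subtle step is the commutation of $\E_n$ with $\kappa$, which is what makes the right-module property work; everything else is bookkeeping.
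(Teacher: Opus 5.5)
Your proposal is correct and follows essentially the same route as the paper: the normal form $a=f+vg$ reduces everything to $\E_n$, you use the commutation of $\E_n$ with $\kappa$ and with conjugation for the bimodule and $*$-properties, and you use the $\gamma$-trick to get $\|f\|,\|g\|\le\|a\|$ and hence $\|E_{m,n}\|\le 2$. You also justify explicitly two points the paper leaves as ``direct calculation'': that $\E_n\kappa=\kappa\E_n$ (via reindexing, or via characters) and that $\kappa$ preserves $C_n(\Z_S)$.
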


\begin{proof}
It is clear that $\E_n(\bar{f}) = \overline{\E_n(f)}$.  Moreover, a direct calculation shows that 
\begin{equation*}
\E_n(\kappa f)=\kappa(\E_n(f))\,.
\end{equation*}
We get
\begin{equation*}
E_n(a^*)=E_n(\bar{f}+v\kappa \bar{g}) = \E_n(\bar{f}) + v\E_n(\kappa \bar{g}) = \left(E_n(a)\right)^*.
\end{equation*}

Let $a=f_1+vg_1$ be in $A_{n+1}$ and $b=f_2+vg_2$ be in $A_n$.  Then using the above fact that $\E_n$ commutes with $\kappa$, we get
\begin{equation*}
\begin{aligned}
E_n(ab) &= E_n\left((f_1+vg_1)(f_2+vg_2)\right) 
= \E_n\left(f_1 f_2 + \kappa(g_1)g_2\right)+v\E_n\left(g_1 f_2 + \kappa(f_1)g_2\right) \\
&= \E_n(f_1)f_2 + \E_n(\kappa(g_1))g_2 + v\left(\E_n(g_1)f_2+\E_n(\kappa(f_1))g_2\right) = E_n(a)b.
\end{aligned}
\end{equation*}
Similarly, if $a\in A_{n}, b\in A_{n+1}$, $E_n(ab)=aE_n(b)$.

Recall that $\gamma:SD(S)\to SD(S)$ defined by $\gamma(f)=f, \gamma(v)=-v$ is an automorphism of $SD(S)$.  If $a=f+vg$ then 
\begin{equation*}
f=\frac{a+\gamma(a)}{2} = \left(\frac{1+\gamma}{2}\right)a\quad\textrm{and}\quad g = v\left(\frac{a-\gamma(a)}{2}\right) = v\left(\frac{1-\gamma}{2}\right)a\,.
\end{equation*}
It follows that $\|f\|_\infty\le \|a\|$ and $\|g\|_\infty\le \|a\|$. 

Then
\begin{equation*}
\|E_{m,n}(a)\| \le \|\E_{m,n}(f)\| + \|\E_{m,n}(g)\| \le \|f\|_\infty+\|g\|_\infty \le 2\|a\|
\end{equation*}
where $\E_{m,n}=\E_n\circ\cdots\circ \E_{m-1}$ for $m>n$.
\end{proof}

It follows from the definition that we have 
\begin{equation*}
B_n=\ker E_{n-1} = \left\{f+vg : f,g\in B_n(S)\right\}
\end{equation*}
where we recall that \begin{equation*}
B_n(S) = \ker \E_{n-1} = \left\{\sum_{z\in G_n\setminus G_{n-1}} \hat{h}_z\chi_z\right\}\,.
\end{equation*}

The RD norms $\|\cdot\|_N$ are given by 
\begin{equation*}
\|f+vg\|_N = \left\|\sum_{n=0}^\infty (f_n+vg_n)\right\|_N := \sum_{n=0}^\infty \|f_n+vg_n\|\lambda_n^N
\end{equation*}
where $f_n,g_n\in B_n(S)$.

Notice that the RD norms defined here fit all of the properties of the general RD norms from Section 3 and thus the norms satisfy the PBE property. Moreover,
\begin{equation*}
SD_\textrm{RD}(S) := \left\{a=\sum_{n=0}^\infty (f_n+vg_n): f_n, g_n\in B_n(S)\,,\,\|a\|_N = \sum_{n=0}^\infty\|f_n+vg_n\|\lambda_n^N<\infty\right\}
\end{equation*}
is a smooth subalgebra of $SD(S)$.

\subsubsection{Alternative Norms}
Consider the following natural norms:
\begin{equation*}
\|a\|_N^{\#}=\|f+vg\|_N^{\#}:=\|f\|^*_N+\|g\|^*_N,
\end{equation*}
where the family $\{\|\cdot\|^*_N\}$ of norms on $C_{RD}(\Z_S)$ was defined in the previous subsection. The norms $\|\cdot\|^{\#}_N$ are submultiplicative and are easier to work with than the norms $\{\|\cdot\|_N\}$ above.

We have the following result.
\begin{prop} Assuming that $\lambda$ grows fast enough and the same sequence $(\lambda_m)_{m\in\N}$ is used in the definition of $\{\|\cdot\|_N\}$, the two sets of norms $\{\|\cdot\|_N\}$ and  $\{\|\cdot\|^{\#}_N\}$ are equivalent.
\end{prop}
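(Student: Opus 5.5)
We reduce everything to the odometer case by comparing, level by level, the norm $\|f_n+vg_n\|$ with $\|f_n\|_\infty+\|g_n\|_\infty$. Both families are computed from the same decomposition $a=\sum_n(f_n+vg_n)$ with $f_n,g_n\in B_n(S)$. The reason is that the projections $E_n$ act separately on the $f$ and $g$ parts, so $f=\sum_n f_n$ and $g=\sum_n g_n$ are exactly the odometer decompositions of $f$ and $g$.

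\emph{The inequality $\|a\|_N\le\|a\|^{\#}_N$.} For each $n$ we have
\[
\|f_n+vg_n\|\le\|f_n\|_\infty+\|v\|\,\|g_n\|_\infty=\|f_n\|_\infty+\|g_n\|_\infty ,
\]
since $v$ is a self-adjoint unitary. Summing with the weights $\lambda_n^N$ gives
\[
\|a\|_N\le\|f\|_N+\|g\|_N ,
\]
where the right-hand side uses the odometer RD norms of the previous subsection. By the odometer comparison proposition just proved, $\|f\|_N\le\|f\|_N^*$ and $\|g\|_N\le\|g\|_N^*$. Hence $\|a\|_N\le\|a\|^{\#}_N$.

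\emph{The reverse inequality.} We use the automorphism $\gamma$ level by level. Apply it to the element $a_n=f_n+vg_n\in A_n$:
\[
f_n=\tfrac12\bigl(a_n+\gamma(a_n)\bigr),\qquad g_n=\tfrac12\, v\bigl(a_n-\gamma(a_n)\bigr).
\]
Since $\gamma$ is a $*$-automorphism of a C$^*$-algebra, it is isometric. This gives
\[
\|f_n\|_\infty\le\|a_n\|\quad\text{and}\quad\|g_n\|_\infty\le\|a_n\| ,
\]
exactly as in the proof of the bimodular projection proposition, where the inclusion of $C(\Z_S)$ into $SD(S)$ is isometric. It follows that
\[
\|f\|_N+\|g\|_N\le 2\|a\|_N .
\]
Now invoke the other half of the odometer proposition, which uses the fast-enough growth condition \eqref{fast_enough}: $\|f\|_N^*\le C\|f\|_{N+\beta}$, and similarly for $g$. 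Therefore
\[
\|a\|^{\#}_N\le C\bigl(\|f\|_{N+\beta}+\|g\|_{N+\beta}\bigr)\le 2C\|a\|_{N+\beta}.
\]

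Together the two inequalities give the equivalence of the families $\{\|\cdot\|_N\}$ and $\{\|\cdot\|^{\#}_N\}$. They also show that the two completions coincide as sets, since finiteness of all norms in one family is equivalent to finiteness in the other.

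The only delicate point is justifying that the level-$n$ component of $a$ in $SD(S)$ splits as $f_n+vg_n$, with $f_n$ and $g_n$ the level-$n$ odometer components of $f$ and $g$. This follows from the definition $E_n(f+vg)=\E_n(f)+v\E_n(g)$: the maps $\mathcal{E}_n$ on $SD(S)$ act componentwise, and the $\gamma$-decomposition is continuous.
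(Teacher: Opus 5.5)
Your proof is correct and follows the paper's argument. One direction uses the triangle inequality levelwise, and the other uses the $\gamma$-averaging bound $\|f_n\|,\|g_n\|\le\|f_n+vg_n\|$. You also make explicit a step the paper leaves implicit: its proof writes $\|f+vg\|^{*}_N=\|f\|_N+\|g\|_N$ with the odometer RD norms, so it only compares with that intermediate norm. You pass from $\|f\|_N+\|g\|_N$ to $\|f\|^{*}_N+\|g\|^{*}_N$ via the odometer proposition, which is exactly where the fast-enough growth assumption is used.
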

\begin{proof}
We have:
\begin{equation*}
\|f+vg\|_N^* = \|f\|_N+\|g\|_N = \sum_{n=0}^\infty \|f_n\|\lambda_n^N + \sum_{n=0}^\infty \|g_n\|\lambda_n^N
\end{equation*}
and so 
$$\|f+vg\|_N = \sum_{n=0}^\infty \|f_n+vg_n\|\lambda_n^N \le \|f+vg\|_N^*.$$

With $\gamma$ defined as above, we also get 
\begin{equation*}
\|f_n\|\le \|f_n+vg_n\| \text{\ and\ } \|g_n\|\le \|f_n+vg_n\|
\end{equation*}
and so $\|f+vg\|_N^* \le 2\|f+vg\|$.
\end{proof}

\subsection{Bunce-Deddens Algebras} 

\subsubsection{Definitions}

First we recall a definition of Bunce-Deddens algebras. We use the description from \cite{KMP2} and the notation established in the subsection on odometers above. In particular, we choose a non-Archimedean length function $\lambda$ that grows fast enough.

Let $H=\ell^2(\Z)$,  $\{E_l\}_{l\in\Z}$ be the canonical basis in $H$, and  $U:H\to H$ be the shift operator on $H$:
\begin{equation*}
UE_l = E_{l+1}\,.
\end{equation*} 

For a continuous function $f\in C(\Z_S)$ we define an operator $M_f:H\to H$ via:
\begin{equation*}
M_fE_l = f(l)E_l\,.
\end{equation*}
Notice that $M_f$ is a diagonal multiplication operator on $H$.  
We have
\begin{equation*}
UM_fU^{-1}=M_{\alpha(f)}
\end{equation*}
where $\alpha: C(\Z_S)\to C(\Z_S)$ is given by 
$$\alpha f(x)=f(x+1).$$

We define the algebra $BD(S)$ to be the C$^*$-algebra generated by operators $U$ and $M_f$:
\begin{equation*}
BD(S)=C^*(U,M_f: f\in C(\Z_S))\,.
\end{equation*} 
$BD(S)$ is isomorphic with the following crossed product algebra:
\begin{equation*}
BD(S)\cong C(\Z_S)\rtimes_\alpha \Z\,.
\end{equation*} 
It is known \cite{Dav} that $BD(S)$ is a simple C$^*$-algebra and it has a unique tracial state.

The subalgebras $A_n$  are defined as finite sums over the characters in $G_n$:
\begin{equation*}
A_n:=\left\{\sum_{z\in G_n} \hat{a}_z(U) M_{\chi_z}:\  \hat{a}_z\in C(S^1)\right\}.
\end{equation*}
They can be alternatively characterized as consisting of those elements of $BD(S)$ that satisfy
\begin{equation*}
U^{s_n}aU^{-s_n}=a.
\end{equation*}

Since $G_n\subsetneqq G_{n+1}$, we have $A_n\subsetneqq A_{n+1}$. We need to verify that the union
$\bigcup_{n\in\Z_{\geq0}}A_n$ is dense in $BD(S)$.  By definition of $BD(S)$, it follows that
\begin{equation*}
\left\{\sum_nU^nM_{f_n}: f_n\in C_\textrm{RD}(\Z_S), \textrm{ finite sums}\right\}
\end{equation*}
is dense in $BD(S)$.  By expanding $f_n\in C_{RD}(\Z_S)$ in a Fourier series from equation \eqref{FSeries}:
\begin{equation*}
f_n=\sum_{z\in\widehat{\Z_S}}\hat f_{n,z} \chi_z,
\end{equation*}
and substituting this into the sum in the above set we get
\begin{equation*}
\sum_n U^nM_{f_n} = \sum_{z\in\widehat{\Z_S}}\sum_n\left(\hat f_{n,z}U^n\right)M_{\chi_z} = \sum_{z\in\widehat{\Z_S}}\hat{a}_z(U)M_{\chi_z}
\end{equation*}
where we define
\begin{equation*}
\hat{a}_z(U):=\sum_{n\in\Z}U^nM_{\hat f_{n,z}}\,.
\end{equation*}
It follows that $\hat{a}_z\in C^\infty(S^1)$ and that $\bigcup_{n\in\Z_{\ge0}} A_n$ is dense in $BD(S)$.

\subsubsection{Conditional Expectations}
We define $E_n:A_{n+1}\to A_{n+1}$ by
\begin{equation*}
E_n(a):=\frac{s_n}{s_{n+1}}\sum_{j=0}^{s_{n+1}/s_n-1}U^{js_n}aU^{-js_n}.
\end{equation*}
We have the following result.
\begin{prop} $E_n$ is a conditional expectation onto $A_n$ and
\begin{equation*}
E_n\left(\sum_{z\in G_{n+1}}\hat{a}_z(U) M_{\chi_z}\right)=\sum_{z\in G_{n}}\hat{a}_z(U) M_{\chi_z}.
\end{equation*}
\end{prop}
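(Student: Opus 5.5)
The plan follows the proof of Proposition \ref{cond_exp_cont} for odometers. We first compute $E_n$ on the spanning elements $\hat{a}_z(U)M_{\chi_z}$ with $z\in G_{n+1}$, then show it is a norm-one projection onto $A_n$, and finally apply Tomiyama's Theorem.

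\emph{Computing on generators.} Since $U$ commutes with $\hat{a}_z(U)$, and $UM_fU^{-1}=M_{\alpha(f)}$ with $\alpha\chi_z=z\chi_z$, we get
\begin{equation*}
U^{js_n}\hat{a}_z(U)M_{\chi_z}U^{-js_n}=\hat{a}_z(U)M_{\alpha^{js_n}\chi_z}=z^{js_n}\hat{a}_z(U)M_{\chi_z}.
\end{equation*}
Therefore
\begin{equation*}
E_n(\hat{a}_z(U)M_{\chi_z})=\frac{s_n}{s_{n+1}}\Big(\sum_{j=0}^{s_{n+1}/s_n-1}z^{js_n}\Big)\hat{a}_z(U)M_{\chi_z}.
\end{equation*}
This is exactly the geometric sum treated in Proposition \ref{cond_exp_cont}. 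If $z\in G_n$, then $z^{s_n}=1$ and the factor equals $1$. If $z\in G_{n+1}\setminus G_n$, then $z^{s_n}\neq1$ but $z^{s_{n+1}}=1$, so the factor vanishes. Linearity then gives the stated formula. Consequently $E_n$ maps $A_{n+1}$ onto $A_n$ and is the identity on $A_n$, so it is a projection onto $A_n$. One can also see the identity on $A_n$ directly from the characterization $U^{s_n}aU^{-s_n}=a$ for $a\in A_n$: every term in the average then equals $a$.

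\emph{Norm bound.} Conjugation by the unitary $U^{js_n}$ is isometric, so
\begin{equation*}
\|E_n(a)\|\le \frac{s_n}{s_{n+1}}\sum_{j}\|U^{js_n}aU^{-js_n}\|=\|a\|.
\end{equation*}
Hence $E_n$ is contractive, and it is idempotent with range $A_n$. Tomiyama's Theorem (Theorem II.6.10.2 in \cite{Black}) then shows that $E_n$ is a conditional expectation. The bimodule property can also be checked directly: for $b\in A_n$ we have $U^{js_n}abU^{-js_n}=(U^{js_n}aU^{-js_n})\,b$, since $b$ commutes with $U^{s_n}$.

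\emph{Main obstacle.} The only delicate point is a domain issue. As written, $A_n$ consists of finite sums with coefficients $\hat{a}_z\in C(S^1)$, so the formula must be checked on all of $A_{n+1}$, not merely on a dense subset. This causes no difficulty: each $z\in G_{n+1}$ contributes a single term, and $G_{n+1}$ is finite, so the computation above covers every element of $A_{n+1}$. If one instead worked with closures, the norm bound would let the identity extend by continuity.
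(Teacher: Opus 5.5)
Your proposal is correct and follows essentially the paper's proof. You compute $E_n$ on each term $\hat a_z(U)M_{\chi_z}$ through the same geometric sum as in Proposition \ref{cond_exp_cont}; the paper phrases this step as $E_n(a(U)M_{\chi_z})=a(U)M_{\E_n(\chi_z)}$ and cites that proposition. You then get $\|E_n(a)\|\le\|a\|$ from unitary invariance and invoke Tomiyama's Theorem, as the paper does.

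One minor point about a convention that both you and the paper use: with $UE_l=E_{l+1}$ one actually gets $UM_fU^{-1}=M_{\alpha^{-1}(f)}$, not $M_{\alpha(f)}$. This only replaces $z^{js_n}$ by $z^{-js_n}$ and changes nothing in the argument.
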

\begin{proof}
Let $a\in A_{n+1}$, then we write
\begin{equation*}
a=\sum_{z\in G_{n+1}}\hat{a}_z(U)M_{\chi_z}\,.
\end{equation*}
Given an element of the form $a(U)M_{\chi_z}$, where $a$ is a continuous function on the unit circle, by definition of $E_n$ we have that
\begin{equation*}
E_n\left(a(U)M_{\chi_z}\right) = a(U)M_{\E_n(\chi_z)}\,.
\end{equation*}
However, by Proposition \ref{cond_exp_cont}, we have that
\begin{equation*}
\E_n(\chi_z)=\left\{
\begin{aligned}
&\chi_z &&\textrm{ if }z\in G_n \\
&0 &&\textrm{ if }z\in G_{n+1}\setminus G_n\,.
\end{aligned}\right.
\end{equation*}
Thus if $a\in A_n$, it follows that $E_n(a)=a$ and that $\textrm{ran }E_n=A_n$. Notice that
\begin{equation*}
\|E_n(a)\|\le \frac{s_n}{s_{n+1}}\sum_{j=0}^{s_{n+1}/s_n-1}\|U^{js_n}aU^{-js_n}\|\le \frac{s_n}{s_{n+1}}\sum_{j=0}^{s_{n+1}/s_n-1}\|a\| = \|a\|\,.
\end{equation*}
Again, Tomiyama's Theorem tells us that $E_n$ is a conditional expectation.
\end{proof}
It follows from this proposition that
\begin{equation*}
B_n = \textrm{ker }E_{n-1} = \left\{\sum_{z\in G_n\setminus G_{n-1}} \hat{a}_z(U) M_{\chi_z}:\  \hat{a}_z\in C(S^1)\right\}\,,
\end{equation*}
and
\begin{equation*}
\|a\|_N = \left\|\sum_{z\in \widehat{\Z_S}} \hat{a}_z(U) M_{\chi_z}\right\|_N = \sum_{n\in\Z_{\ge0}}\left\|\sum_{z\in G_n\setminus G_{n-1}} \hat{a}_z(U)M_{\chi_z}\right\|\lambda_n^N\,.
\end{equation*}

\subsubsection{Alternative Norms}
The following norms were introduced in \cite{HKM1} to define a smooth subalgebra $BD_\textrm{RD}(S)$ of $BD(S)$:
\begin{equation*}
\|a\|_{M,N}= \sum_{z\in\Zshat} \|\hat{a}_z\|_{C^M} \lambda(z)^N.
\end{equation*}
It was proved in \cite{HKM1} that those norms have the PBE property.  In what follows we relate those results with the general setup in this paper.

We have the following result.
\begin{prop} Assuming that $\lambda$ grows fast enough, the two sets of norms $\{\|\cdot\|_N\}$ and $\{\|\cdot\|_{0,N}\}$ are equivalent.
\end{prop}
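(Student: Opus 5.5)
We follow the same two-sided strategy as in the odometer case. One inequality is the triangle inequality. The other uses the size of $G_n\setminus G_{n-1}$ together with the fast-growth condition \eqref{fast_enough}.

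\textbf{Upper bound for $\|a\|_N$.} Write $a=\sum_n a_n$ with
\begin{equation*}
a_n=\sum_{z\in G_n\setminus G_{n-1}}\hat a_z(U)M_{\chi_z}\in B_n .
\end{equation*}
Since $M_{\chi_z}$ is unitary, the triangle inequality gives
\begin{equation*}
\|a_n\|\le\sum_{z\in G_n\setminus G_{n-1}}\|\hat a_z(U)\|=\sum_{z\in G_n\setminus G_{n-1}}\|\hat a_z\|_{C^0}.
\end{equation*}
Here we use $\|\hat a_z(U)\|=\sup_{S^1}|\hat a_z|$, because $U$ is the bilateral shift and has full spectrum $S^1$. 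For $z\in G_n\setminus G_{n-1}$ we have $\lambda(z)=\lambda_n$, since the length function $\lambda_{s,l}$ is constant on these shells. Multiplying by $\lambda_n^N$ and summing over $n$ gives $\|a\|_N\le\|a\|_{0,N}$.

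\textbf{Recovering a single coefficient.} We need a bound $\|\hat a_z(U)\|\le C\|a_n\|$ for each $z\in G_n\setminus G_{n-1}$. To isolate one character, use a discrete Fourier projection built from the conjugations $U^{j}\cdot U^{-j}$ with $j=0,\dots,s_n-1$. Since $UM_fU^{-1}=M_{\alpha f}$ and $\alpha\chi_w=w\chi_w$, we have
\begin{equation*}
U^jaU^{-j}=\sum_w w^j\hat a_w(U)M_{\chi_w}.
\end{equation*}
Define
\begin{equation*}
P_z(a):=\frac1{s_n}\sum_{j=0}^{s_n-1}\bar z^{\,j}\,U^ja_nU^{-j}.
\end{equation*}
By orthogonality of characters on the cyclic group $G_n$ of order $s_n$, $P_z(a)=\hat a_z(U)M_{\chi_z}$. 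It is an average of unitary conjugates, so $\|P_z(a)\|\le\|a_n\|$, and multiplying by the unitary $M_{\chi_z}^{-1}$ gives $\|\hat a_z(U)\|\le\|a_n\|$. This is the analogue of $|\hat f_z|\le\|f_n\|$ from the odometer proof. It is the only genuinely new step, and the point to check is that the decomposition of an element of $A_n$ into $\sum_{z\in G_n}\hat a_z(U)M_{\chi_z}$ is unique, so that $P_z$ is well defined. Uniqueness follows from the same averaging identity.

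\textbf{Lower bound.} Summing the coefficient bound over $z$,
\begin{equation*}
\|a\|_{0,N}=\sum_n\sum_{z\in G_n\setminus G_{n-1}}\|\hat a_z\|_{C^0}\lambda_n^N\le\sum_n (s_n-s_{n-1})\|a_n\|\lambda_n^N\le c\sum_n\|a_n\|\lambda_n^{N+\beta}=c\|a\|_{N+\beta},
\end{equation*}
where the last step uses \eqref{fast_enough}. The two families of norms are therefore equivalent.
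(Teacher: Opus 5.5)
Your proposal is correct and follows the paper's proof essentially step for step. You use the triangle inequality to get $\|a\|_N\le\|a\|_{0,N}$, average over the conjugations $U^j\cdot U^{-j}$ with $0\le j<s_n$ to get $\|\hat a_z\|_{C^0}\le\|a_n\|$, and then bound $s_n-s_{n-1}\le s_n\le c\lambda_n^\beta$. The only difference is cosmetic: you weight the average by $\bar z^{\,j}$ and then remove $M_{\chi_z}$, whereas the paper first multiplies by $M_{\overline{\chi}_w}$ and averages without weights. Since $U^jM_{\overline{\chi}_w}U^{-j}=\bar w^{\,j}M_{\overline{\chi}_w}$, these are the same computation.
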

\begin{proof}
For $a\in BS(S)$, recall that
\begin{equation*}
\|a\|_N = \sum_{n=0}^\infty\|a_n\|\lambda_n^N\quad\textrm{and}\quad \|a\|_{0,N}=\sum_{z\in\widehat{\Z}_S}\|\hat{a}_z\|_{C^0(S^1)}\lambda(z)^N
\end{equation*}
where $a_n\in B_n$ and $\hat{a}_z$ are continuous functions on $S^1$.  Notice that
\begin{equation*}
\|a_n\|\le \sum_{z\in G_n\setminus G_{n-1}}\|\hat{a}_z(U)\|\|M_{\chi_z}\| = \sum_{z\in G_n\setminus G_{n-1}}\|\hat{a}_z\|_{C^0(S^1)}\,,
\end{equation*}
and it therefore follows that $\|a\|_N\le \|a\|_{0,N}$.  

Next, for a $w\in G_n\setminus G_{n-1}$, consider the following calculation
\begin{equation*}
\begin{aligned}
\frac{1}{s_n}\sum_{j=0}^{s_n-1}U^j(a_nM_{\overline{\chi}_w})U^{-j} &= \frac{1}{s_n}\sum_{j=0}^{s_n-1}\sum_{z\in G_n\setminus G_{n-1}}\hat{a}_z(U)U^jM_{\chi_{zw^{-1}}}U^{-j}\\
&=\frac{1}{s_n}\sum_{z\in G_n\setminus G_{n-1}}\hat{a}_z(U)\sum_{j=0}^{s_n-1}(zw^{-1})^jM_{\chi_{zw^{-1}}} =\hat{a}_w(U)
\end{aligned}
\end{equation*}
since the sum over $j$ is only nonzero if $z=w$.  Thus we have that
\begin{equation*}
\|\hat{a}_w\|_{C^0(S^1)}=\|\hat{a}_w(U)\|\le\frac{1}{s_n}\sum_{j=0}^{s_n-1}\left\|U^j(a_nM_{\overline{\chi}_w})U^{-j}\right\|\le \frac{1}{s_n}\sum_{j=0}^{s_n-1}\|a_n\| =\|a_n\|\,.
\end{equation*}
Using the above observation we get
\begin{equation*}
\begin{aligned}
\|a\|_{0,N}&=\sum_{z\in\widehat{Z}_S}\|\hat{a}_z\|_{C^0(S^1)}\lambda(z)^N =\sum_{n=0}^\infty\sum_{z\in G_n\setminus G_{n-1}}\|\hat{a}_z\|_{C^0(S^1)}\lambda_n^N
\le \sum_{n=0}^\infty\sum_{z\in G_n\setminus G_{n-1}}\|a_n\|\lambda_n^N \\
&= \sum_{n=0}^\infty(s_n-s_{n-1})\|a_n\|\lambda_n^N\le \sum_{n=0}^\infty s_n\|a_n\|\lambda_n^N \le const \sum_{n=0}^\infty \|a_n\|\lambda_n^{N+\beta} = const\|a\|_{N+\beta}\,,
\end{aligned}
\end{equation*}
which establishes the two families of norms are equivalent.
\end{proof}

As a consequence, this proposition gives an alternative proof that the norms $\|\cdot\|_{0,N}$ from \cite{HKM1} have the PBE property.  Additionally, if $\delta_{\mathbb{L}}$ is the derivation in $BD(S)$ defined on generators by
\begin{equation*}
\delta_{\mathbb L}(U)=U\quad\textrm{and}\quad \delta_{\mathbb L}(M_f)=0\,,
\end{equation*}
then 
\begin{equation*}
\|a\|_{M,N} = \sum_{j=0}^M\left(
\begin{array}{c}
M \\ j
\end{array}\right)\|\delta_{\mathbb L}^{j}(a)\|_{0,N}\,.
\end{equation*}
It follows from Proposition \ref{blackcunt} that the norms $\|\cdot\|_{M,N}$ have the PBE property and so
\begin{equation*}
BD_\textrm{RD}(S) := \left\{a\in BD(S)\,,\,\|a\|_{M,N}<\infty,\,M,N=0,1,\ldots\right\}
\end{equation*}
is a smooth subalgebra of $BD(S)$.

\subsection{Uniformly Hyperfinite Algebras} 
\subsubsection{Definitions}
Uniformly Hyperfinite Algebras (UHF) were introduced by James Glimm  in  \cite{Gl}. 

A UHF algebra is a unital C$^*$-algebra $A$ for which there exists a chain 
$$A_0 \subseteq A_1 \subseteq \cdots\subseteq A$$
of unital subalgebras (i.e. subalgebras containing the unit of $A$) such that each $A_n$ is $*$-isomorphic to a matrix algebra $M_{s_n} (\C)$, and the union
$\cup_{n\in\Z_{\geq0}}A_n$ is dense in $A$. We assume that $s_0=1$ and $A_0$ is the one dimensional subalgebra generated by the identity of $A$.

It follows from the structure of unital homomorphisms of matrix algebras that $s_m$ divides $s_{m+1}$, $s_m<s_{m+1}$, and define a supernatural number $S$ by $S=\mathrm{lcm}(s_m)$. Thus, the sequence $(s_m)_{m\in\Z_{\geq0}}$ is a scale for $S$.
Glimm showed that the supernatural number $S$ is a complete invariant of UHF C$^*$-algebras. 

We will denote a UHF algebra with supernatural number $S$ by $UHF(S)$.
It is known \cite{Dav} that $UHF(S)$ is a simple C$^*$-algebra and it has a unique tracial state constructed from normalized traces on matrix subalgebras.

It is often useful to consider a concrete representation of $UHF(S)$. Let $\{E_k\}_{k\in\Z}$ be the standard orthonormal basis in the Hilbert space $H=\ell^2(\Z)$.

For $n=0,1,2, \ldots$ and $0\leq x,y<s_n$ define
\begin{equation*}
P_{(n,x,y)}E_k = \left\{
\begin{aligned}
&E_{k+x-y} &&\textrm{if }s_n|(k-y)  \\
&0 &&\textrm{else.}
\end{aligned}\right.
\end{equation*}
Main properties of operators $P_{(n,x,y)}$ are summarized in the following statement.
\begin{prop}\label{pnxyprop}
With the above notation, the operators $P_{(n,x,y)}$ have the following properties for all  $n=0,1,2, \ldots$ and $0\leq x,y,z,w<s_n$:
\begin{enumerate}
\item $P_{(n,x,y)}^*=P_{(n,y,x)}$
\item  $P_{(n,x,y)}P_{(n,w,z)}=\delta_{y,w}P_{(n,x,z)}$
\item  $\sum\limits_{x=0}^{s_n-1}P_{(n,x,x)}=I$
\item   $\sum\limits_{j=0}^{s_{n+1}/s_n-1}P_{(n+1,x+js_n,y+js_n)}=P_{(n,x,y)}$
\end{enumerate}
\end{prop}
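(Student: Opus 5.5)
Each of the four properties is a pointwise identity between operators on $H=\ell^2(\Z)$. My plan is to check each one by evaluating both sides on a basis vector $E_k$, using only the definition of $P_{(n,x,y)}$. The point to keep in view is that the condition $s_n\mid(k-y)$ picks out exactly one residue class modulo $s_n$, and that $0\le x,y<s_n$.

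For (1), I will compute the matrix coefficients $\langle P_{(n,x,y)}E_k,E_l\rangle$. This coefficient equals $1$ exactly when $l=k+x-y$ and $s_n\mid(k-y)$, which is the same as $k=l+y-x$ and $s_n\mid(l-x)$. That is precisely the condition for $\langle E_k,P_{(n,y,x)}E_l\rangle=1$, so the two operators are adjoint.

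For (2), I will apply $P_{(n,w,z)}$ to $E_k$. The result is nonzero only if $k\equiv z \pmod{s_n}$, and then it equals $E_{k+w-z}$. Since $k+w-z\equiv w \pmod{s_n}$, the next application of $P_{(n,x,y)}$ survives only if $w\equiv y \pmod{s_n}$. Because $0\le w,y<s_n$, this forces $w=y$. In that case the output is $E_{k+x-z}$, nonzero exactly when $k\equiv z$, which is the action of $P_{(n,x,z)}$.

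For (3), each $P_{(n,x,x)}$ is the projection onto $\overline{\mathrm{span}}\{E_k: k\equiv x \pmod{s_n}\}$. As $x$ runs over $0,\dots,s_n-1$ these residue classes partition $\Z$. Each $E_k$ is therefore fixed by exactly one term of the sum and killed by all the others, so the sum is $I$.

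For (4), I evaluate the left side on $E_k$. A term with index $j$ contributes $E_{k+x-y}$ (the shift $js_n$ cancels in $x+js_n-(y+js_n)$) exactly when $s_{n+1}\mid(k-y-js_n)$. If $s_n\nmid(k-y)$, no $j$ works, because $s_n\mid s_{n+1}$. If $s_n\mid(k-y)$, write $k-y=ms_n$. The condition becomes $(s_{n+1}/s_n)\mid(m-j)$, and exactly one $j$ in $\{0,\dots,s_{n+1}/s_n-1\}$ satisfies it. I must also confirm that the indices are admissible: $x+js_n<s_n+js_n\le s_{n+1}$, and likewise for $y$.

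The main obstacle is this last step. It is the only one requiring care with the divisibility $s_n\mid s_{n+1}$, the uniqueness of $j$ in the residue count, and the range of the indices. The other three are direct bookkeeping.
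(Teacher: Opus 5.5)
Your proposal is correct and follows the paper's proof essentially step for step. Each identity is checked on basis vectors $E_k$: matrix coefficients give (1), the residue argument forcing $w=y$ gives (2), the partition of $\Z$ into residue classes modulo $s_n$ gives (3), and uniqueness of $j$ modulo $s_{n+1}/s_n$ gives (4). In (4) you also state the divisibility condition correctly as $s_{n+1}\mid(k-y-js_n)$, where the paper's text drops the factor $j$, and you check the index range $x+js_n<s_{n+1}$, which the paper leaves implicit.
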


\begin{proof}
The first item follows the observation that the inner product $(E_l,P_{(n,x,y)}E_k)$ is not zero only if $l=k+x-y$ and 
$s_n|(k-y)$, which is equivalent to $k=l+y-x$ and $s_n|(l-x)$.

For the second item, consider the following calculation:
\begin{equation*}
\begin{aligned}
P_{(n,x,y)}P_{(n,w,z)}E_k &= P_{(n,x,y)}\left\{
\begin{aligned}
& E_{k+w-z}  &\textrm{ if }s_n|(k-z)\\
&0 &\textrm{else} 
\end{aligned}\right. \\
&= \left\{
\begin{aligned}
& E_{k+w-z+x-y}  &\textrm{ if }s_n|(k-z)\textrm{ and }s_n|(k+w-z-y)\\
&0 &\textrm{else.} 
\end{aligned}\right.
\end{aligned}
\end{equation*}
Since $s_n|(k-z)$, it follows that $s_n|(w-y)$, but since $0\le w,y<s_n$, it must be that $w=y$ to have a nonzero outcome.  In other words
\begin{equation*}
P_{(n,x,y)}P_{(n,w,z)} = \delta_{wy}P_{(n,x,z)}\,.
\end{equation*}

For the third item notice that $P_{(n,x,x)}E_k = E_k$ only if $s_n|(k-x)$, otherwise it is zero.  Thus
\begin{equation*}
\sum_{x=0}^{s_n-1}P_{(n,x,x)}E_k = E_k = IE_k\,,
\end{equation*}
because there is only one nonzero term in the sum, namely when $x = k\textrm{ mod }s_n$.

For the last item, notice that
\begin{equation*}P_{(n+1,x+js_n,y+js_n)}E_k= E_{k+x-y}
\end{equation*}
only if $s_{n+1}|(k-y-s_n)$ and zero otherwise.   Like in the previous item, there is only one term in the sum below that produces a nonzero term, namely when $j=(k-y)/s_n\textrm{ mod }s_{n+1}/s_n$.  It follows that
\begin{equation*}
\sum_{j=0}^{s_{n+1}/s_n-1}P_{(n+1,x+js_n,y+js_n)}E_k=\left\{
\begin{aligned}
& E_{k+x-y} &&\textrm{ if }s_n|(k-y) \\
&0 &&\textrm{ else}
\end{aligned}\right. = P_{(n,x,y)}E_k\,.
\end{equation*}
\end{proof}

For a fixed $n$ let $A_n$ be the C$^*$-algebra in $B(H)$ generated by $P_{(n,x,y)}$ with $0\leq x,y<s_n$ and let $A$ be the C$^*$-algebra in $B(H)$ generated by all $P_{(n,x,y)}$ with $n=0,1,2, \ldots$ and $0\leq x,y<s_n$, so that $\cup_{n\in\Z_{\geq0}}A_n$ is dense in $A$. By item 1 and 2 of the above proposition, $P_{(n,x,y)}$'s are matrix units and hence $A_n$ is $*$-isomorphic to the matrix algebra $M_{s_n} (\C)$. By item 3, the subalgebras $A_n$ are unital subalgebras of $A$ and by item 4 we have $A_n\subseteq A_{n+1}$ for all $n$. It follows that $A\cong UHF(S)$.

Operators $P_{(n,x,x)}$ are diagonal operators in $H$ with diagonal elements that are periodic with periods $s_n$. It follows 
that the Abelian C$^*$-subalgebra of $UHF(S)$ generated by all $P_{(n,x,x)}$ operators is isomorphic to $C(\Z_S)$.
To define RD norms we choose an increasing sequence $l = (\lambda_m)_{m\in\N}$ of numbers in $(1,\infty)$ such that $\lim_{m\to\infty}\lambda_m=\infty$. Choosing a scale  $s = (s_m)_{m\in\N}$ and $l$ amounts to choosing a length function $\lambda_{s,l}$ on $\widehat{\Z_S}$.

\subsubsection{Conditional Expectations}
We now want to construct conditional expectations $E_n:A_{n+1}\to A_n$. This is done with the help of the following projections in $A_{n+1}$:
\begin{equation*}
P_n:=\sum\limits_{x=0}^{s_n-1}P_{(n+1,x,x)}.
\end{equation*}
We define $E_n:A_{n+1}\to A_n$ by
\begin{equation*}
E_n\left(\sum_{0\le x,y<s_{n+1}} a_{(x,y)}P_{(n+1,x,y)}\right) = \sum_{0\le x,y<s_n} a_{(x,y)} P_{(n,x,y)}.
\end{equation*}
We have the following result.
\begin{prop} $E_n$ is a conditional expectation onto $A_n$.
\end{prop}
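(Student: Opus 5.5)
The plan is to realize $E_n$ as a compression by the projection $P_n$ followed by a $*$-isomorphism, so that $E_n$ is a contraction. Once we also know that $E_n$ is a projection onto $A_n$, Tomiyama's Theorem (Theorem II.6.10.2 in \cite{Black}) makes it a conditional expectation, exactly as in the odometer and Bunce-Deddens cases.

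\textbf{Step 1: well-definedness and linearity.} By items 1 and 2 of Proposition \ref{pnxyprop}, the operators $P_{(n+1,x,y)}$ form a system of matrix units. Since $A_{n+1}\cong M_{s_{n+1}}(\C)$, they are linearly independent and span $A_{n+1}$. Hence every $a\in A_{n+1}$ has unique coefficients $a_{(x,y)}$, and $E_n$ is a well-defined linear map into $A_n$.

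\textbf{Step 2: $E_n$ is the identity on $A_n$.} Take $a=\sum_{0\le x,y<s_n} b_{(x,y)}P_{(n,x,y)}\in A_n$. Item 4 of Proposition \ref{pnxyprop} rewrites this as
\begin{equation*}
a=\sum_{0\le x,y<s_n}\ \sum_{j=0}^{s_{n+1}/s_n-1} b_{(x,y)}P_{(n+1,x+js_n,y+js_n)}.
\end{equation*}
The only coefficients of $a$ with both indices in $[0,s_n)$ are the $j=0$ ones, namely $a_{(x,y)}=b_{(x,y)}$. Therefore $E_n(a)=a$, so $E_n$ is a projection onto $A_n$.

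\textbf{Step 3: contractivity.} Using item 2, compute
\begin{equation*}
P_naP_n=\sum_{0\le x,y<s_n}a_{(x,y)}P_{(n+1,x,y)}.
\end{equation*}
The corner $P_nA_{n+1}P_n$ is spanned by the matrix units $P_{(n+1,x,y)}$ with $0\le x,y<s_n$, so it is isomorphic to $M_{s_n}(\C)$. The linear map
\begin{equation*}
\varphi:P_nA_{n+1}P_n\to A_n,\qquad \varphi\bigl(P_{(n+1,x,y)}\bigr)=P_{(n,x,y)},
\end{equation*}
sends matrix units to matrix units of the same size. Items 1 and 2 hold on both sides, so $\varphi$ is a $*$-isomorphism. (It is not unital, but that does not matter here.) An injective $*$-homomorphism between C$^*$-algebras is isometric. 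Since $E_n(a)=\varphi(P_naP_n)$, we get
\begin{equation*}
\|E_n(a)\|=\|P_naP_n\|\le\|a\|.
\end{equation*}

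\textbf{Step 4: conclusion.} By Steps 2 and 3, $E_n$ is a norm-one projection onto the C$^*$-subalgebra $A_n$, so Tomiyama's Theorem shows it is a conditional expectation. In particular $E_n$ is bimodular, and the resulting constant is $\Omega=1$.

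The main obstacle is Step 3: seeing that the coefficient-truncation formula is really the compression $a\mapsto P_naP_n$ transported by an isometric isomorphism. A naive bound on the coefficients would not give norm one. Step 2 also needs care, since it depends on item 4 to identify the coefficients of an element of $A_n$ in the finer matrix units.
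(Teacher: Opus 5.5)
Your proposal is correct and follows the paper's proof essentially step for step. You use item 4 of Proposition~\ref{pnxyprop} to see that $E_n$ fixes $A_n$, identify $E_n(a)$ with the compression $P_naP_n$ via the matrix-unit correspondence to get $\|E_n(a)\|\le\|a\|$, and conclude with Tomiyama's Theorem. One harmless aside: by item 3, your $\varphi$ sends the corner's unit $P_n$ to $I$, so it is in fact unital as a map from $P_nA_{n+1}P_n$ onto $A_n$, though this plays no role in the argument.
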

\begin{proof}
Note that 
\begin{equation*}
P_{(n,x,y)}=\sum_{j=0}^{s_{n+1}/{s_n}-1} P_{(n+1,x+js_n,y+js_n)}
\end{equation*}
and so
\begin{equation*}
E_n(P_{(n,x,y)}) = E_n\left(\sum_{j=0}^{s_{n+1}/{s_n}-1} P_{(n+1,x+js_n,y+js_n)}\right) = P_{(n,x,y)}
\end{equation*}
since $x+js_n$ and $y+js_n$ are larger than $s_n$ for $j>0$.  Thus $E_n$ is a projection onto $A_n$.

Note that
\begin{equation*}
P_n P_{(n+1,x,y)} P_n = \sum_{j,k=0}^{s_n-1} P_{(n+1,j,j)} P_{(n+1,x,y)} P_{(n+1,k,k)} = \begin{cases}
P_{(n+1,x,y)} & \text{if\ } 0\le x,y<s_n \\ 0 & \text{else}\end{cases}.\end{equation*}

Let 
\begin{equation*}
a=\sum_{0\le x,y<s_{n+1}} a_{(x,y)}P_{(n+1,x,y)}
\end{equation*}
and apply $P_n$ to the left and right side of $a$:
\begin{equation*}
P_n a P_n = \sum_{0\le x,y<s_n} a_{(x,y)}P_{(n+1,x,y)}\,.
\end{equation*}
From this we get the following norm estimate
\begin{equation*}
\left\| \sum_{0\le x,y<s_n} a_{(x,y)} P_{(n+1,x,y)}\right\| \le \|a\|\,.
\end{equation*}

Since $P_{(n+1,x,y)}$ and $P_{(n,x,y)}$ are matrix units, it follows that
\begin{equation*}
\|E_n(a)\|=\left\|\sum_{0\le x,y<s_n} a_{(x,y)}P_{(n,x,y)}\right\|=\left\|[a_{(x,y)}]_{x,y=0}^{s_n-1}\right\| = \left\| \sum_{0\le x,y<s_n} a_{(x,y)} P_{(n+1,x,y)}\right\|\le\|a\|
\end{equation*}
where $\|[a_{(x,y)}]_{x,y=0}^{s_n-1}\|$ is the matrix norm. Thus, by Tomiyama's Theorem, $E_n$ is a conditional expectation. 
\end{proof}

Let $\mathfrak{S}_0=(0,0)$ and let 
$$\mathfrak{S}_n=\{(x,y)\in \Z_{\geq 0}\times \Z_{\geq 0}: s_{n-1}\le x<s_n\textrm{ or }s_{n-1}\le y<s_n\}.$$
Notice that for a fixed $n$, the sets $\{(x,y)\in \Z_{\geq 0}\times \Z_{\geq 0}: 0\le x,y<s_{n-1}\}$ and $\mathfrak{S}_n$ are disjoint, and their union is the set $\{(x,y)\in \Z_{\geq 0}\times \Z_{\geq 0}: 0\le x,y<s_{n}\}$. Thus by a recurrence argument, the sets  $\mathfrak{S}_n$ are disjoint and hence \begin{equation*}
\bigcup_{n=0}^\infty \mathfrak{S}_n=\Z_{\geq 0}\times \Z_{\geq 0}.
\end{equation*}

It follows from the definition of $E_n$ that
\begin{equation*}
B_n = \textrm{ker }E_{n-1} = \left\{\sum_{(x,y)\in\mathfrak{S}_n} a_{(x,y)} P_{(n,x,y)}:a_{(x,y)}\in\C\right\}\,.
\end{equation*}
From our definition of $A_{\textrm{RD}}$ we have that
\begin{equation*}
UHF_{\textrm{RD}}(S)=\left\{a=\sum_{n=0}^\infty\left(\sum_{(x,y)\in\mathfrak{S}_n}a_{(x,y)}P_{(n,x,y)}\right): \|a\|_N<\infty,\,a_{(x,y)}\in\C\right\}
\end{equation*}
where
\begin{equation*}
\|a\|_N=\sum_{n=0}\left\|\sum_{(x,y)\in\mathfrak{S}_n}a_{(x,y)}P_{(n,x,y)}\right\|\lambda_n^N\,.
\end{equation*}

\subsubsection{Alternative Norms}
We can define a more natural family of norms.  For $a\in UHF_{\textrm{RD}}(S)$ define
\begin{equation*}
\|a\|_N^\% = \sum_{n=0}^\infty \sum_{(x,y)\in\mathfrak{S}_n}|a_{(x,y)}|\lambda_n^N\,.
\end{equation*}
As with the other examples we demonstrate that these collections of norms are equivalent.

\begin{prop}
Assuming that $\lambda$ grows fast enough, the two sets of norms $\{\|\cdot\|_N\}$ and $\{\|\cdot\|_N^\%\}$ are equivalent.
\end{prop}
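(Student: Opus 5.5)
The plan follows the same two-sided comparison used in the odometer and Bunce--Deddens examples. We show $\|a\|_N\le\|a\|_N^\%$ directly, and $\|a\|_N^\%\le C\|a\|_{N+\beta'}$ for some fixed $\beta'$ depending only on the constants $c,\beta$ in \eqref{fast_enough}. Both estimates reduce to a comparison, level by level, between the operator norm of
$$a_n=\sum_{(x,y)\in\mathfrak{S}_n}a_{(x,y)}P_{(n,x,y)}$$
and the $\ell^1$ sum of its coefficients.

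\emph{First inequality.} The operators $P_{(n,x,y)}$ are matrix units, so each has norm one by item 1 and item 2 of Proposition \ref{pnxyprop}: $P^*P=P_{(n,y,y)}$ is a nonzero projection. The triangle inequality then gives
$$\|a_n\|\le\sum_{(x,y)\in\mathfrak{S}_n}|a_{(x,y)}|.$$
Multiplying by $\lambda_n^N$ and summing over $n$ yields $\|a\|_N\le\|a\|_N^\%$.

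\emph{Second inequality.} Each coefficient is bounded by the norm of the block it sits in. Using item 2 of Proposition \ref{pnxyprop}, for $(x,y)\in\mathfrak{S}_n$ we have
$$P_{(n,x,x)}\,a_n\,P_{(n,y,y)}=a_{(x,y)}P_{(n,x,y)},$$
so $|a_{(x,y)}|\le\|a_n\|$. Alternatively, note that $a_n$ is an $s_n\times s_n$ matrix and the entries of a matrix are bounded by its operator norm. Next we count $|\mathfrak{S}_n|\le s_n^2$, which gives
$$\sum_{(x,y)\in\mathfrak{S}_n}|a_{(x,y)}|\,\lambda_n^N\le s_n^2\|a_n\|\lambda_n^N\le c^2\|a_n\|\lambda_n^{N+2\beta}.$$
Summing over $n$ gives $\|a\|_N^\%\le c^2\|a\|_{N+2\beta}$. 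If $2\beta$ is not an integer, we replace it by $\lceil 2\beta\rceil$, which is allowed since $\lambda_n>1$.

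\emph{Main point to check.} The step needing care is confirming that the expansion $a=\sum_n a_n$ with $a_n\in B_n$ coincides with the coefficient decomposition indexed by the disjoint sets $\mathfrak{S}_n$. This amounts to two facts. First, each element of $B_n$ is exactly a combination of the $P_{(n,x,y)}$ with $(x,y)\in\mathfrak{S}_n$, which follows from the stated description of $\ker E_{n-1}$. Second, the $a_{(x,y)}$ appearing in $\|\cdot\|^\%_N$ are those level-$n$ coefficients. Once this identification is in place, both inequalities are termwise, and equivalence of the families follows. As a consequence, the norms $\|\cdot\|_N^\%$ also inherit the PBE property.
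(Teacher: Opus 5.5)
Your proposal is correct and follows the paper's proof essentially step for step. The triangle inequality gives $\|a\|_N\le\|a\|_N^\%$, and the bound $|a_{(x,y)}|\le\|a_n\|$ together with $|\mathfrak{S}_n|=s_n^2-s_{n-1}^2\le s_n^2\le c^2\lambda_n^{2\beta}$ yields $\|a\|_N^\%\le c^2\|a\|_{N+2\beta}$. Your compression identity $P_{(n,x,x)}a_nP_{(n,y,y)}=a_{(x,y)}P_{(n,x,y)}$ supplies the justification for the coefficient bound, which the paper states without proof.
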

\begin{proof}
It follows immediately from the definitions of $\|\cdot\|_N$ and $\|\cdot\|_N^\%$ that $\|a\|_N\le \|a\|_N^\%$ for $a\in UHF_{\textrm{RD}}(S)$.  On the other hand for $(x,y)\in\mathfrak{S}_n$ we have that 
\begin{equation*}
|a_{(x,y)}|\le \left\|\sum_{(k,l)\in\mathfrak{S}_n}a_{(k,l)}P_{(n,k,l)}\right\|\,.
\end{equation*}
Therefore, we get that
\begin{equation*}
\begin{aligned}
\|a\|_N^\% &\le\sum_{n=0}^\infty\sum_{(x,y)\in\mathfrak{S}_n}\left\|\sum_{(k,l)\in\mathfrak{S}_n}a_{(k,l)}P_{(n,k,l)}\right\|\lambda_n^N = \sum_{n=0}^\infty (s_n^2-s_{n-1}^2)\left\|\sum_{(k,l)\in\mathfrak{S}_n}a_{(k,l)}P_{(n,k,l)}\right\|\lambda_n^N\\
&\le \sum_{n=0}^\infty s_n^2\left\|\sum_{(k,l)\in\mathfrak{S}_n}a_{(k,l)}P_{(n,k,l)}\right\|\lambda_n^N \le const\|a\|_{N+2\beta}\,.
\end{aligned}
\end{equation*}
\end{proof}
It now follows that the family of norms $\{\|\cdot\|_N^\%\}$ have the PBE property.

\end{document}